\numberwithin{equation}{section}
\newcommand{\ABS}[1]{\left(#1\right)} 
\newcommand{\mf}[1]{\mathfrak{#1}}
\DeclareMathOperator{\sgn}{sgn}
\DeclareMathOperator{\pf}{Pf}
\DeclareMathOperator{\hf}{Hf}
\DeclareMathOperator{\var}{Var}
\def\ii{\mathtt{i}}
\def\({ \left( }
\def\){ \right)}
\def\trans#1{\mathord{#1}^{\mathrm{t}}}
\def\inlaw{\stackrel{\mathtt{d}}{=}}
\def\la{\lambda}
\def\La{\Lambda}
\theoremstyle{plain}
\newtheorem{theorem}{Theorem}[section]
\theoremstyle{prop}
\newtheorem{proposition}[theorem]{Proposition}
\newtheorem{lemma}[theorem]{Lemma}
\newtheorem{corollary}[theorem]{Corollary}
\theoremstyle{definition}
\newtheorem{remark}[theorem]{Remark}
\theoremstyle{conjecture}
\title{\textbf{Correlation functions for zeros of 
a Gaussian power series and Pfaffians}}
\author{\textsc{Sho Matsumoto and Tomoyuki Shirai}}
\date{\empty}
\begin{document}

\maketitle

\begin{abstract}
We show that the zeros of the random power series 
with i.i.d. real Gaussian coefficients
form a Pfaffian point process. We further show that
the product moments for absolute values and signatures of the power series 
can also be expressed by Pfaffians. \\
\\
\textbf{Keywords:} Gaussian power series ; point process ; zeros ; Pfaffian. \\
\textbf{AMS MSC 2010:} 60G55 ; 30B20 ; 60G15 ; 30C15.
\end{abstract}



\section{Introduction}

Zeros of Gaussian processes have attracted much attention 
for many years both from theoretical and practical points of view. 
The first significant contribution to this study was made 
by Paley and Wiener \cite{PW}.   
They computed the expectation of the number of zeros of 
(translation invariant) analytic Gaussian processes on a strip in the
complex plane, which
are defined as Wiener integrals. 
Their work was motivated by the theory, developed by Bohr and Jessen, 
of almost periodic
functions in the complex domain arising from Riemann's zeta function.
Kac 
gave an explicit expression for the probability density function 
of real zeros of a random polynomial 
\[
 f_n(z) = \sum_{k=0}^n a_k z^k
\] 
with i.i.d. real standard Gaussian coefficients $\{a_k\}_{k=0}^n$ 
and obtains precise asymptotics of the numbers of real zeros 
as $n \to \infty$ \cite{Kac}. 
Rice also obtained similar formulas for the zeros of 
random Fourier series with Gaussian coefficients in 
the theory of filtering \cite{Rice}. 
Their results have been extended in various ways (e.g. \cite{EK,LS,SV}) 
and generalizations of their formulas are sometimes called the Kac-Rice formulas. 
A recent remarkable result on zeros of Gaussian processes 
is that the complex Gaussian process 
$f_{\mathbb{C}}(z) := \sum_{k=0}^{\infty} \zeta_k z^k$ with i.i.d. 
complex standard Gaussian coefficients form 
a determinantal point process on the open unit disk $\mathbb{D}$ 
associated with the Bergman kernel $K(z,w) = \frac{1}{(1-z\bar{w})^2}$, 
which was found by Peres and Vir\'ag \cite{PV}. 
Krishnapur 
extended this result to the zeros of the determinant of the power series with 
coefficients being i.i.d. Ginibre matrices \cite{Kri}. 

In the present paper, we deal with the Gaussian power series
\begin{equation} \label{eq:definition-f}
f(z)= \sum_{k=0}^\infty a_k z^k,
\end{equation}
where $\{a_k\}_{k=0}^\infty$ are i.i.d. {\it real} standard 
Gaussian random variables.
The radius of convergence of $f$ is almost surely $1$, 
and the set of the zeros of $f$ forms a point process on the open unit disc
$\mathbb{D}$ as does that of $f_{\mathbb{C}}$. 
The primary difference between $f$ and $f_{\mathbb{C}}$ comes from the fact that 
$f(z)$ is a real Gaussian process 
when the parameter $z$ is restricted on $(-1,1)$ and each realization of 
$f(z)$ has symmetry with respect to the complex conjugation so that
there appear both real zeros and complex ones in conjugate pairs. 

Our main purpose is to 
show that both correlation functions for real zeros and complex zeros of
$f$ are given by Pfaffians, i.e., 
they form Pfaffian point processes on $(-1,1)$ and $\mathbb{D}$, respectively. 
The most known examples of Pfaffian point processes appeared as 
random eigenvalues of the Gaussian orthogonal/symplectic ensembles. 
Real and complex eigenvalues of the \emph{real} Ginibre ensemble 
are also proved to be Pfaffian point processes on $\mathbb{R}$ and $\mathbb{C}$,
respectively \cite{BS, FN}. 
Recently, it is shown that 
the particle positions of instantly coalescing (or annihilating) 
Brownian motions on the real line under the maximal entrance law 
form a Pfaffian point process on $\mathbb{R}$ \cite{TZ}, 
which is closely related to the real Ginibre ensemble. 
Our result on correlation functions of zeros of $f$ is added to the list of Pfaffian
point processes, which is also obtained independently in \cite{F} via random matrix theory.
Here we will give a direct proof by using Hammersley's formula for 
correlation functions of zeros of Gaussian analytic functions 
and a Pfaffian-Hafnian identity due to Ishikawa, Kawamuko, and Okada \cite{IKO}. 
This is a similar way to that which was taken in \cite{PV} 
to prove that the zeros of $f_{\mathbb{C}}$ form a determinantal
point process, and in the process of our calculus for real zero correlations, 
we obtain new Pfaffian formulas for a real Gaussian process. 
The family $\{f(t)\}_{-1<t<1}$ can be regarded as
a centered real Gaussian process with covariance kernel $(1-st)^{-1}$. 
We show that, for any $-1<t_1,t_2,\dots,t_n<1$,
both the moments of absolute values 
$E[|f(t_1) f(t_2) \cdots f(t_n)|]$ and 
those of signatures $E[\sgn (f(t_1)) \cdots \sgn (f(t_n))]$
are also given by Pfaffians. 
We stress that it should be \emph{surprising} because
such combinatorial formulas cannot be expected 
for 
general centered Gaussian processes. 
These are special features for the Gaussian process 
with covariance kernel $(1-st)^{-1}$.

The paper is organized as follows. 
In Section 2, we state our main results for correlations of 
real and complex zeros of $f$ 
(Theorems~\ref{thm:correlation} and \ref{thm:c-correlation}), and 
we give new product moment formulas 
for absolute values and signatures of $f$ 
(Theorems~\ref{thm:absolute} and \ref{thm:sgn}). 
Also we observe a negative correlation property of real and complex zeros 
by showing negative correlation inequalities for $2$-correlation 
functions. The asymptotics of the number of real zeros inside 
intervals growing to $(-1,1)$ is also shown. 
In Section 3, we recall the well-known Cauchy determinant formula 
and the Wick formula for product moments of Gaussian random variables. 
In Section 4, after we show an identity in law for $f$ and $f'$ 
given that $f$ is vanishing at some points, 
we give a preliminary version of Pfaffian formulas 
(Proposition~\ref{prop:differential}) 
for the derivative of the expectation of products of sign functions. 
In Sections 5, 6 and 7, we give the proofs of our results stated in 
Section 2.

\section{Results}

\subsection{Pfaffians}

Our main results will be described by using Pfaffians, so
let us recall the definition.
For a $2n \times 2n$ skew symmetric matrix
 $B=(b_{ij})_{i,j=1}^{2n}$,
the Pfaffian of $B$ is defined by
\[
\pf (B)= \sum_{\eta} \epsilon(\eta) b_{\eta(1) \eta(2)}
b_{\eta(3) \eta(4)} \cdots b_{\eta(2n-1) \eta(2n)},
\]
summed over all permutations 
$\eta$ on $\{1,2,\dots, 2n\}$
satisfying $\eta(2i-1)<\eta(2i)$ $(i=1,2,\dots,n)$ and 
$\eta(1)< \eta(3)< \cdots <\eta(2n-1)$.
Here $\epsilon(\eta)$ is the signature of $\eta$.
For example, 
\begin{equation}
\pf (B)=b_{11} \quad \text{if $n=1$} \qquad \text{and}
\qquad 
\pf (B)=b_{12}b_{34}-b_{13}b_{24}+b_{14}b_{23}
\quad \text{if $n=2$}.
\label{pfaffian} 
\end{equation}

For an upper-triangular array $A=(a_{ij})_{1 \le i<j \le 2n}$,
we define the Pfaffian of $A$ as that of the skew-symmetric matrix 
$B=(b_{ij})_{i,j=1}^{2n}$, each entry of which is 
$b_{ij}=-b_{ji}=a_{ij}$ if $i<j$ and $b_{ii}=0$.

\subsection{Notation}

 We will often use the following functions: 
for $-1 <s,t<1$, 
\begin{align}
\sigma(s,t)=&\frac{1}{1-st}, \qquad \qquad \mu(s,t)= \frac{s-t}{1-st}, 
\label{eq:definition-sm}\\
c (s,t) 
=& \frac{\sigma(s,t)}{\sqrt{\sigma(s,s) \sigma(t,t)}}
=\frac{\sqrt{(1-s^2)(1-t^2)}}{1- s t},
\label{eq:definition-c}
\end{align} 
where $\sigma(s,t)$ is the covariance function for the 
real Gaussian process $\{f(t)\}_{-1<t<1}$ and
$c(s,t)$ is the correlation coefficient between $f(s)$ and $f(t)$. 
We define the skew symmetric matrix kernel $\mathbb{K}$ by
\[
\mathbb{K} (s,t)= \begin{pmatrix}
\mathbb{K}_{11}(s,t) & \mathbb{K}_{12}(s,t) \\
\mathbb{K}_{21}(s,t) & \mathbb{K}_{22}(s,t)  
\end{pmatrix}
\]
with
\begin{align*}
\mathbb{K}_{11}(s,t)=& \frac{s-t}{\sqrt{(1-s^2)(1-t^2)} (1-st)^2}, &
\mathbb{K}_{12}(s,t) =& \sqrt{\frac{1-t^2}{1-s^2}} \frac{1}{1-st}, \\
\mathbb{K}_{21}(s,t) =& -\sqrt{\frac{1-s^2}{1-t^2}} \frac{1}{1-st}, 
&
\mathbb{K}_{22}(s,t)=& 
\sgn(t-s) \arcsin c(s,t), 
\end{align*}
where $\sgn(t)  = |t|/t$ for $t \not= 0$ and $\sgn (t) =0$ for $t=0$. 
Note that $\mathbb{K}_{12}(s,t) = -\mathbb{K}_{21}(t,s)$
and 
\begin{equation}
\mathbb{K}(s,t) = 
 \begin{pmatrix} \frac{\partial^2}{\partial s \partial t} \mathbb{K}_{22}(s,t) &
 \frac{\partial}{\partial s} \mathbb{K}_{22}(s,t) \\
 \frac{\partial}{\partial t} \mathbb{K}_{22}(s,t) &
\mathbb{K}_{22}(s,t)
 \end{pmatrix}.
\label{derivative}
\end{equation}

For $-1<t_1,t_2,\dots,t_n<1$, we 
write
$ (\mathbb{K}(t_i,t_j))_{i,j=1}^n$ 
for the $2n \times 2n$ skew symmetric matrix
\[
\begin{pmatrix}
\mathbb{K}(t_1,t_1) & \mathbb{K}(t_1,t_2) & \ldots & \mathbb{K}(t_1,t_n) \\
\mathbb{K}(t_2,t_1) & \mathbb{K}(t_2,t_2) & \ldots & \mathbb{K}(t_2,t_n) \\
\vdots & \vdots & \ddots & \vdots \\
\mathbb{K}(t_n,t_1) & \mathbb{K}(t_n,t_2) & \ldots & \mathbb{K}(t_n,t_n) 
\end{pmatrix},
\]
and denote the covariance matrix of the 
real Gaussian vector $(f(t_1),f(t_2),\dots,f(t_n))$ by 
\begin{equation} \label{eq:definition-Sigma}
\Sigma(\bm{t})=\Sigma(t_1,\dots,t_n)=(\sigma(t_i,t_j))_{i,j=1}^n.
\end{equation}

Throughout this paper, $\mf{X}_n$ denotes 
the set of all sequences $\bm{t}=(t_1,\dots,t_n)$
of $n$ distinct real numbers in the interval $(-1,1)$.
If $\bm{t} \in \mf{X}_n$ then $\Sigma(\bm{t})$ 
is positive-definite.

\subsection{Real zero correlations}

Our first theorem states that
the real zero distribution of $f$ 
defined in \eqref{eq:definition-f}
forms a Pfaffian point process.

\begin{theorem} \label{thm:correlation}
Let $\rho_n$ be the $n$-point correlation function
of real zeros of $f$.
Then  
\[
\rho_n(t_1,\dots,t_n)= \pi^{-n}
 \pf (\mathbb{K}(t_i,t_j))_{i,j=1}^n \qquad
(-1< t_1,\dots,t_n <1).
\]
\end{theorem}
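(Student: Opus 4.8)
The plan is to combine a Kac--Rice/Hammersley representation of $\rho_n$ with an explicit evaluation of a Gaussian moment of absolute values, and then to recognize the resulting combinatorial sum as the Pfaffian $\pf(\mathbb{K}(t_i,t_j))_{i,j=1}^n$. First I would invoke Hammersley's formula, which writes the $n$-point correlation of the real zeros of the smooth Gaussian process $\{f(t)\}_{-1<t<1}$ as
\[
\rho_n(t_1,\dots,t_n)=p_{\bm t}(\bm 0)\,E\Big[\prod_{i=1}^n|f'(t_i)|\ \Big|\ f(t_1)=\cdots=f(t_n)=0\Big],
\]
where $p_{\bm t}(\bm 0)=\big((2\pi)^n\det\Sigma(\bm t)\big)^{-1/2}$ is the density of $(f(t_1),\dots,f(t_n))$ at the origin; since $\bm t\in\mf{X}_n$ makes $\Sigma(\bm t)$ positive-definite, both factors are well defined. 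As a consistency check, for $n=1$ this gives $\rho_1(t)=\pi^{-1}(1-t^2)^{-1}=\pi^{-1}\mathbb{K}_{12}(t,t)=\pi^{-1}\pf(\mathbb{K}(t,t))$.

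Next I would identify the conditional law. From the joint Gaussian vector $(f(t_i),f'(t_i))_{i}$, conditioning on $f(\bm t)=\bm 0$ reduces the problem to the absolute moment $E[\prod_i|Y_i|]$ of a centered Gaussian $Y$ whose covariance is the Schur complement $C_{22}-\trans{C_{12}}\,\Sigma(\bm t)^{-1}C_{12}$, with $C_{12}=(\partial_{t_j}\sigma(t_i,t_j))_{i,j}$ and $C_{22}=(\partial_s\partial_t\sigma|_{(t_i,t_j)})_{i,j}$. Here the Cauchy structure $\sigma(s,t)=(1-st)^{-1}$ is essential: via the Cauchy determinant and the explicit inverse of a Cauchy matrix recalled in Section 3, the density factor, the Schur complement, and all cross-covariances become closed-form rational (and square-root) expressions, and these are exactly the ingredients assembled in $\mathbb{K}_{11},\mathbb{K}_{12},\mathbb{K}_{21}$.

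The combinatorial heart is the evaluation of $E[\prod_i|Y_i|]$. The model case is the bivariate identity $E[|Y_1Y_2|]=\frac{2}{\pi}\sqrt{v_1v_2}\,(\sqrt{1-r^2}+r\arcsin r)$, whose $\arcsin r$ is precisely the two-point sign correlation that reappears in $\mathbb{K}_{22}(s,t)=\sgn(t-s)\arcsin c(s,t)$. Differentiating absolute moments with respect to the covariance entries (Price's theorem) replaces a factor $|Y_i|$ by $\sgn(Y_i)$ and ties together the absolute, mixed, and pure sign moments; this is the mechanism I expect behind Proposition~\ref{prop:differential}. The relation \eqref{derivative}, which exhibits every block of $\mathbb{K}$ as a partial derivative of the single scalar kernel $\mathbb{K}_{22}$, then lets me generate $\mathbb{K}_{11},\mathbb{K}_{12},\mathbb{K}_{21}$ from $\mathbb{K}_{22}$ by differentiating in $s$ and $t$, matching the factors of $f'$ supplied by Hammersley's formula. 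The Wick/Isserlis expansion of $E[\prod_i|Y_i|]$ produces a Hafnian-type sum over pairings weighted by these special covariances, and I would apply the Pfaffian--Hafnian identity of Ishikawa, Kawamuko, and Okada \cite{IKO} to collapse it into $\pf(\mathbb{K}(t_i,t_j))_{i,j=1}^n$, absorbing the constants into the prefactor $\pi^{-n}$.

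The hard part will be exactly this last passage. For a generic centered Gaussian process no Pfaffian formula for $E[\prod_i|f'(t_i)|]$ can hold, so the argument must use the Cauchy kernel $\sigma=(1-st)^{-1}$ in an essential way to force the absolute-moment expansion into the precise Hafnian form to which the IKO identity applies. I must also keep the absolute values throughout (Hammersley), rather than differentiating sign moments directly, since the latter would compute a signed-crossing correlation instead of the unsigned zero correlation $\rho_n$; and I must check that the derivative bookkeeping in \eqref{derivative} reproduces all four blocks of $\mathbb{K}$ with the correct $\sgn(t-s)$ factors, so that the antisymmetry underlying the Pfaffian is respected.
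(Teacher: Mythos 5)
Your frame --- Hammersley's real Kac--Rice formula, reduction of the conditional law using the Cauchy covariance $\sigma(s,t)=(1-st)^{-1}$, and the Ishikawa--Kawamuko--Okada Pfaffian--Hafnian identity as the combinatorial engine --- matches the paper's. The conditional-law step is also essentially right, though the paper packages it more sharply as an identity in law, $(f'(t_i),i\le n \mid f(t_1)=\cdots=f(t_n)=0)\inlaw (q_i(\bm t)f(t_i), i\le n)$ with $\prod_i q_i(\bm t)=(-1)^{n(n-1)/2}\det\Sigma(\bm t)$, which reduces the conditional absolute moment to $E[|f(t_1)\cdots f(t_n)|]$ for the \emph{same, unconditioned} process at the \emph{same} points; you should make that identification explicit rather than just asserting that the Schur complement is ``closed form.''

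The genuine gap is the step you yourself flag as the hard part: ``the Wick/Isserlis expansion of $E[\prod_i|Y_i|]$ produces a Hafnian-type sum over pairings.'' No such expansion exists --- the Wick formula \eqref{wick} applies only to plain products $E[Y_1\cdots Y_{2n}]$, and the paper stresses that no closed formula for $E[|Y_1\cdots Y_n|]$ is known for a general Gaussian vector beyond $n=3$. Moreover, the target $\pf(\mathbb{K}(t_i,t_j))$ contains the $\arcsin$ entries $\mathbb{K}_{22}$, which cannot arise from any pairing sum over covariances. The paper's actual mechanism is precisely the one you rule out in your last paragraph: it \emph{does} differentiate sign moments, but of a doubled point configuration. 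One takes $E[\sgn f(t_1)\cdots\sgn f(t_n)\sgn f(s_1)\cdots\sgn f(s_n)]$, differentiates only in $t_1,\dots,t_n$ (Lemma \ref{lem:key-lem}), and lets $s_i\downarrow t_i$, so each $f(t_i)\sgn f(s_i)$ becomes $|f(t_i)|$ and no signed-crossing count is introduced; the Hafnian and the IKO identity enter only in Proposition \ref{prop:differential}, where the fully differentiated sign moment reduces to $E[f(t_1)\cdots f(t_{2n})]$ \emph{without} absolute values. The Pfaffian formula for the undifferentiated sign moments (Theorem \ref{thm:sgn}) is then recovered by integrating back up, using the skew-symmetry of $\pf\mathbb{L}^{[k]}$ and the boundary condition that both sides vanish as $t_{2n}\to1$ (Lemmas \ref{lem:sgn-limit}--\ref{lem:differential-pfaffian}); only after that do the mixed derivatives and coincident limits yield Theorem \ref{thm:absolute} and hence Theorem \ref{thm:correlation}. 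Your Price's-theorem idea gestures at the right kind of link between absolute and sign moments, but as written the proposal has no working route from $E[\prod_i|Y_i|]$ to a Hafnian, and the integration-with-boundary-conditions argument that the paper needs is entirely absent.
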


For example,  
the first two correlations are given as follows: 
 \begin{align}
\rho_1(s) &= \pi^{-1} \mathbb{K}_{12}(s,s), \nonumber\\
 \rho_2(s,t) &= \pi^{-2}\{\mathbb{K}_{12}(s,s) \mathbb{K}_{12}(t,t)
- \mathbb{K}_{11}(s,t) \mathbb{K}_{22}(s,t)
+ \mathbb{K}_{12}(s,t) \mathbb{K}_{21}(s,t)\}, 
\label{2correlation}
\end{align}
from which we easily see that 
\[
 \rho_1(s) = \frac{1}{\pi(1-s^2)}, \quad 
\rho_2(s,t) = \frac{1}{2\pi (1-s^2)^3} |t-s| + O(|t-s|^2) 
\]
as $t \to s$. 
The first correlation is observed by Kac and many others 
although Kac considered the random polynomial with i.i.d. 
real Gaussian coefficients. 
The second asymptotic expression means that 
the real zeros of $f$  repel 
each other as expected. 
Moreover, we can show that the 2-correlation is 
negatively correlated. 

\begin{corollary}\label{2point}
Let $R(s,t)= \frac{\rho_2(s,t)}{\rho_1(s)\rho_1(t)}$ be the normalized 
$2$-point correlation function. Then, 
$R(s,s)=0$, $R(s,\pm 1)=1$ and 
$R(s,t)$ is strictly increasing (resp. decreasing) for 
$t \in [s,1]$ (resp. $t \in [-1,s]$). 
In particular, $\rho_2(s,t) \le \rho_1(s) \rho_1(t)$ for every 
$s, t \in (-1,1)$. 
\end{corollary}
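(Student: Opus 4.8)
The plan is to collapse the two-variable quantity $R(s,t)$ into a function of the single scalar $c=c(s,t)$. First I would substitute the four entries of the kernel $\mathbb{K}$ into the expression \eqref{2correlation} for $\rho_2$ and divide by $\rho_1(s)\rho_1(t)=\pi^{-2}\mathbb{K}_{12}(s,s)\mathbb{K}_{12}(t,t)$. The computation is driven by the elementary identity
\[
(1-st)^2-(1-s^2)(1-t^2)=(s-t)^2,
\]
which yields $1-c(s,t)^2=\mu(s,t)^2$ and hence $\frac{|t-s|}{\sqrt{(1-s^2)(1-t^2)}}=\frac{\sqrt{1-c^2}}{c}$. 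After the $\sgn(t-s)$ in $\mathbb{K}_{22}$ combines with the factor $s-t$ in $\mathbb{K}_{11}$ to produce $-|t-s|$, everything reorganizes into
\[
R(s,t)=g(c(s,t)),\qquad g(c)=1-c^2+c\sqrt{1-c^2}\,\arcsin c,\qquad c\in[0,1].
\]

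Second, I would establish that $g$ is strictly decreasing on $[0,1]$ with $g(0)=1$ and $g(1)=0$, both of which follow by direct evaluation. Differentiating gives
\[
g'(c)=-c+\frac{(1-2c^2)\arcsin c}{\sqrt{1-c^2}},
\]
whose sign is not transparent because the factor $1-2c^2$ changes sign at $c=1/\sqrt2$. The device that resolves this is the auxiliary function $\phi(c)=c\sqrt{1-c^2}-(1-2c^2)\arcsin c$, for which $g'(c)<0$ is equivalent to $\phi(c)>0$. Its derivative telescopes to $\phi'(c)=4c\arcsin c>0$, and since $\phi(0)=0$ this forces $\phi>0$, hence $g'<0$, on all of $(0,1)$.

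Third, I would track the argument $c(s,t)$ as a function of $t$ for fixed $s$. Since
\[
\frac{\partial}{\partial t}\mu(s,t)=-\frac{1-s^2}{(1-st)^2}<0,
\]
the quantity $\mu(s,t)$ decreases strictly from $1$ at $t=-1$ through $0$ at $t=s$ to $-1$ at $t=1$; writing $c=\sqrt{1-\mu^2}$ then shows that $c(s,t)$ increases strictly from $0$ to $1$ on $[-1,s]$ and decreases strictly from $1$ to $0$ on $[s,1]$, with $c(s,s)=1$ and $c(s,\pm1)=0$.

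Finally, composing the two monotonicities closes the argument: since $g$ is strictly decreasing, on $[s,1]$ (where $c$ decreases) $R=g\circ c$ strictly increases, and on $[-1,s]$ (where $c$ increases) it strictly decreases, while the endpoints give $R(s,s)=g(1)=0$ and $R(s,\pm1)=g(0)=1$. As $R$ therefore never exceeds $1$, we conclude $\rho_2(s,t)\le\rho_1(s)\rho_1(t)$. I expect the main obstacle to be the second step: the closed form in the first step is mechanical once the identity $1-c^2=\mu^2$ is in hand, whereas proving $g'<0$ genuinely requires spotting the auxiliary $\phi$ whose derivative simplifies to the manifestly positive $4c\arcsin c$.
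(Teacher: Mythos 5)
Your proposal is correct and is essentially the paper's own argument in single-variable form: the paper also writes $R=1+|\mu|c\arcsin c-c^2$ (using $|\mu|=\sqrt{1-c^2}$), and its auxiliary function $g(s,t)=c^2\arcsin c-|\mu|^2\arcsin c+c|\mu|$ is exactly your $\phi(c)$, shown nonnegative by the same device (it vanishes where $c=0$ and its derivative is a positive multiple of $c^2|\mu|\arcsin c$, the chain-rule image of your $\phi'(c)=4c\arcsin c$). The only difference is bookkeeping: you compose through the monotonicity of $c(s,t)$ in $t$, while the paper differentiates $R$ and $g$ directly in $t$.
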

By using (\ref{2correlation}), 
we can also compute the mean and 
variance of the number of points inside $[-r,r]$. 

\begin{corollary} \label{cor:variance}
Let $N_r$ be the number of real zeros in the interval $[-r,r]$ for 
$0<r<1$. Then, 
\begin{align*}
 E N_r = \frac{1}{\pi} \log \frac{1+r}{1-r}, \qquad 
 \var N_r = 2\Big(1 - \frac{2}{\pi}\Big) E N_r + O(1)
\end{align*}
as $r \to 1$. 
\end{corollary}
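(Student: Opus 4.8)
The plan is to combine the first-intensity integral for the mean with the standard variance identity for a simple point process, and then to exploit a hidden hyperbolic stationarity of $\rho_1$ and $\rho_2$. Since $\rho_1(s)=\frac{1}{\pi(1-s^2)}$, the mean is immediate:
\[
E N_r=\int_{-r}^r \rho_1(s)\,ds=\frac{1}{\pi}\Big[\tfrac12\log\tfrac{1+s}{1-s}\Big]_{-r}^{r}=\frac{1}{\pi}\log\frac{1+r}{1-r}.
\]
For the variance I would start from
\[
\var N_r=E N_r+\int_{-r}^r\!\!\int_{-r}^r\big(\rho_2(s,t)-\rho_1(s)\rho_1(t)\big)\,ds\,dt,
\]
and insert the explicit two-point function from \eqref{2correlation}. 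Using $\mathbb{K}_{12}(s,t)\mathbb{K}_{21}(s,t)=-(1-st)^{-2}$, $\mathbb{K}_{12}(s,s)\mathbb{K}_{12}(t,t)=\pi^2\rho_1(s)\rho_1(t)$ and $(s-t)\sgn(t-s)=-|s-t|$, the truncated correlation reduces to
\[
\rho_2(s,t)-\rho_1(s)\rho_1(t)=\pi^{-2}\Big(\frac{|s-t|\,\arcsin c(s,t)}{\sqrt{(1-s^2)(1-t^2)}\,(1-st)^2}-\frac{1}{(1-st)^2}\Big).
\]

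The decisive step is the change of variables $s=\tanh u$, $t=\tanh v$, which converts the first intensity into Lebesgue measure, $\rho_1(s)\,ds=\frac{1}{\pi}\,du$, so that with $L=\tfrac12\log\frac{1+r}{1-r}$ one has $E N_r=\frac{2L}{\pi}$ and the square $[-r,r]^2$ becomes $[-L,L]^2$. Under this substitution $c(s,t)=1/\cosh(u-v)$, and I would use the algebraic identity $c(s,t)^2+\mu(s,t)^2=1$ (so that $|\mu(s,t)|=\sqrt{1-c^2}=\tanh|u-v|$) to show that the truncated correlation is translation invariant:
\[
\big(\rho_2(s,t)-\rho_1(s)\rho_1(t)\big)\,ds\,dt=g(u-v)\,du\,dv,\qquad g(w)=\pi^{-2}\Big(\frac{\tanh|w|}{\cosh w}\arcsin\frac{1}{\cosh w}-\frac{1}{\cosh^2 w}\Big).
\]
The kernel $g$ is even and decays exponentially, hence integrable over $\mathbb{R}$, and we are left with $\var N_r=E N_r+\int_{-L}^{L}\!\int_{-L}^{L}g(u-v)\,du\,dv$.

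Finally I would reduce the double integral by the convolution identity
\[
\int_{-L}^{L}\!\int_{-L}^{L}g(u-v)\,du\,dv=\int_{-2L}^{2L}(2L-|w|)\,g(w)\,dw,
\]
and let $r\to1$, i.e. $L\to\infty$. Because $g$ has exponential tails, the right-hand side equals $2L\int_{\mathbb{R}}g(w)\,dw+O(1)=\pi\,E N_r\int_{\mathbb{R}}g(w)\,dw+O(1)$. The remaining constant is evaluated by the substitution $x=1/\cosh w$, under which $\int_0^\infty \frac{\tanh w}{\cosh w}\arcsin\frac{1}{\cosh w}\,dw=\int_0^1\arcsin x\,dx=\frac{\pi}{2}-1$, while $\int_0^\infty \cosh^{-2}w\,dw=1$; thus $\int_{\mathbb{R}}g=\frac{2}{\pi^2}\big(\frac{\pi}{2}-2\big)=\frac{1}{\pi}-\frac{4}{\pi^2}$. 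Combining, $\var N_r=E N_r\big(1+\pi(\frac{1}{\pi}-\frac{4}{\pi^2})\big)+O(1)=2\big(1-\frac{2}{\pi}\big)E N_r+O(1)$, as claimed. The main obstacle I anticipate is recognizing the hyperbolic stationarity behind the substitution $s=\tanh u$ together with the identity $c^2+\mu^2=1$; once these are in place the error term is controlled by the exponential decay of $g$ and the constant follows from the elementary integral $\int_0^1\arcsin x\,dx$.
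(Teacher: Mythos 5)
Your proof is correct, and it reaches the stated constant by a genuinely different route from the paper's. The paper works directly in the variables $(s,t)$: starting from $\var N_r=\int\!\!\int\rho_2+\int\rho_1-(\int\rho_1)^2$ and the expression \eqref{2correlation}, it integrates $\int_{-r}^r\mathbb{K}_{11}(s,t)\mathbb{K}_{22}(s,t)\,dt$ by parts using $\mathbb{K}_{11}=\frac{\partial^2}{\partial s\partial t}\mathbb{K}_{22}$, picking up the diagonal contribution $-\pi\,\mathbb{K}_{12}(s,s)$ from the jump of size $\pi$ of $\mathbb{K}_{22}$ across $t=s$, and arrives at the closed form $\var N_r=2\pi^{-2}\bigl(\pi\log\frac{1+r}{1-r}-2\log\frac{1+r^2}{1-r^2}\bigr)+O(1)$, from which $2(1-\frac{2}{\pi})EN_r$ is read off. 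You instead exploit the hyperbolic stationarity: under $s=\tanh u$, $t=\tanh v$ one has $c(s,t)=1/\cosh(u-v)$ and $\mu(s,t)=\tanh(u-v)$, the intensity becomes Lebesgue measure $du/\pi$, and the truncated pair correlation becomes a convolution kernel $g(u-v)$ with exponential tails, after which the identity $\int_{-L}^{L}\!\int_{-L}^{L}g(u-v)\,du\,dv=\int_{-2L}^{2L}(2L-|w|)g(w)\,dw$ and the elementary integral $\int_0^1\arcsin x\,dx=\frac{\pi}{2}-1$ finish the job. I checked your intermediate steps — $(s-t)\sgn(t-s)=-|s-t|$, $\mathbb{K}_{12}(s,t)\mathbb{K}_{21}(s,t)=-(1-st)^{-2}$, $c^2+\mu^2=1$, and $\int_{\mathbb{R}}g=\frac{1}{\pi}-\frac{4}{\pi^2}$ — and they are all right. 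Your approach buys a conceptual explanation of why the variance is asymptotically linear in $EN_r$ (the real zeros form a stationary process in the hyperbolic coordinate, so the variance grows like the length times an integrated covariance), at the price of not producing the paper's explicit finite-$r$ expression; the paper's integration by parts is more computational but keeps exact control of all terms up to the stated $O(1)$ boundary contributions.
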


\begin{remark}
The kernel $\mathbb{K}$ in Theorem \ref{thm:correlation} is not determined uniquely.
For example, 
we can replace $\mathbb{K}$  by 
$\mathbb{K}'$, which is defined by
\begin{align*}
\mathbb{K}'_{11}(s,t)=&\frac{s-t}{(1-s t)^2}, \qquad\qquad
\mathbb{K}'_{12}(s,t)=-\mathbb{K}_{21}'(t,s)= \frac{1}{1-s t}, \\
\mathbb{K}'_{22}(s,t)=& \frac{\sgn(t-s)}{\sqrt{(1-s^2)(1-t^2)}}
\arcsin c(s,t).
\end{align*}
In fact, 
if we set 
\[
Q(s,t) = \delta_{st} \begin{pmatrix} \sqrt{1-t^2} & 0 \\ 0 & 
\frac{1}{\sqrt{1-t^2}} \end{pmatrix}
\]
then 
$(Q(t_i,t_j))_{i,j=1}^n  \cdot (\mathbb{K}(t_i,t_j))_{i,j=1}^n \cdot  
(Q(t_i,t_j))_{i,j=1}^n = (\mathbb{K}'(t_i,t_j))_{i,j=1}^n$,
and 
therefore two Pfaffians associated with $\mathbb{K}$ and $\mathbb{K}'$ coincide
from the following well-known identity:
for any $2n \times 2n$ matrix $A$ and 
$2n \times 2n$ skew symmetric matrix $B$,
 $\pf (A B \trans{A})=(\det A)(\pf B)$.
\end{remark}

\subsection{Pfaffian formulas for a real Gaussian process}

As corollaries of the proof of Theorem \ref{thm:correlation},
we obtain Pfaffian expressions for 
averages of $|f(t_1) \cdots f(t_n)|$ and 
$\sgn f(t_1) \cdots \sgn f(t_n)$.

\begin{theorem} \label{thm:absolute}
For $\bm{t}=(t_1,\dots,t_n) \in \mf{X}_n$, we have
\[
E[ |f(t_1) f(t_2)\cdots f(t_n)| ]
= \ABS{\frac{2}{\pi}}^{n/2} (\det \Sigma(\bm{t}))^{-\frac{1}{2}}
\pf (\mathbb{K}(t_i,t_j))_{i,j=1}^n.
\]
\end{theorem}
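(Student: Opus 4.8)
The plan is to reduce the statement to a computation of the product moment of absolute values of a centered Gaussian vector and then to convert the resulting combinatorial sum into a Pfaffian. Since $\bm t\in\mf X_n$, the vector $(f(t_1),\dots,f(t_n))$ is a nondegenerate centered real Gaussian vector with covariance $\Sigma(\bm t)=(\sigma(t_i,t_j))$, and the quantity to compute is $E[\prod_j|f(t_j)|]$. I would first write $|x|=x\,\sgn(x)$, so that
\[
E\Big[\prod_{j=1}^n|f(t_j)|\Big]=E\Big[\prod_{j=1}^n f(t_j)\prod_{j=1}^n\sgn f(t_j)\Big],
\]
and then strip off the smooth factors $f(t_j)$ one at a time by Gaussian integration by parts (the Wick formula recalled in Section~3). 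Each application of $E[f(t_i)\,G(\bm f)]=\sum_k\sigma(t_i,t_k)\,E[\partial_{x_k}G(\bm f)]$ either contracts $f(t_i)$ against another value factor, producing a scalar $\sigma(t_i,t_k)$, or hits a sign factor, producing $2\sigma(t_i,t_k)\,\delta(f(t_k))$ and hence a conditioning on $f(t_k)=0$. Carrying this out to the end expands the moment as a sum over contraction patterns whose surviving factors are pairwise sign correlations $E[\sgn f(t_i)\sgn f(t_j)]=\frac{2}{\pi}\arcsin c(t_i,t_j)$ together with entries of $\Sigma(\bm t)$; this is exactly the data encoded in the block kernel $\mathbb{K}$ through its derivative structure \eqref{derivative}.

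The second step is to recognize this contraction sum as the Hafnian of a single $2n\times 2n$ matrix assembled from $\Sigma(\bm t)$ and the arcsine kernel, and to apply the Pfaffian--Hafnian identity of Ishikawa, Kawamuko and Okada \cite{IKO} to rewrite it as a Pfaffian. I would organize the bookkeeping so that the matrix appearing inside the Pfaffian is precisely $(\mathbb{K}(t_i,t_j))_{i,j=1}^n$, using the relation \eqref{derivative} to match the four blocks against $\mathbb{K}_{22}$ and its derivatives. The variance normalizations $\sigma(t_j,t_j)=(1-t_j^2)^{-1}$ contribute the correct powers of $(1-t_j^2)$; combined with the explicit Cauchy-determinant evaluation of $\det\Sigma(\bm t)$ from Section~3 and the congruence identity $\pf(AB\trans A)=(\det A)(\pf B)$ from the Remark, they produce the prefactor $\ABS{\frac{2}{\pi}}^{n/2}(\det\Sigma(\bm t))^{-1/2}$.

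The main obstacle is the middle step: verifying that the Wick expansion of $E[\prod_j f(t_j)\prod_j\sgn f(t_j)]$ genuinely collapses into a Hafnian to which \cite{IKO} applies, and that the emerging Pfaffian is literally $\pf(\mathbb{K}(t_i,t_j))$ and not some other skew kernel. The difficulty is that the expectation of a product of signs is not a Pfaffian for an arbitrary Gaussian covariance; the collapse relies essentially on the special Cauchy structure $\sigma(s,t)=(1-st)^{-1}$, which is what makes the higher-order sign correlations reducible to the pairwise arcsine entries. I would therefore expect to spend most of the effort matching the combinatorics of the contraction patterns against the Pfaffian expansion and tracking the $\delta$-function (conditioning) terms so that the $(\det\Sigma(\bm t))^{-1/2}$ normalization emerges correctly. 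This is also where the parallel with Theorem~\ref{thm:correlation}, whose proof produces the companion factor $(\det\Sigma(\bm t))^{+1/2}$ via Hammersley's formula, should be exploited.
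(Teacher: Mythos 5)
Your opening reduction $|f(t_j)|=f(t_j)\sgn f(t_j)$ and the intended ingredients (arcsine pairwise sign correlations, the Ishikawa--Kawamuko--Okada identity, the $(\det\Sigma(\bm t))^{\pm 1/2}$ bookkeeping) are all the right objects, but the central step of your plan --- that Gaussian integration by parts makes the contraction sum ``collapse'' into a Hafnian of pairwise data to which \eqref{eq:pfaffian-hafnian} applies --- is asserted rather than achievable by the means you describe, and this is where the real content of the theorem lives. Two concrete problems. First, each integration by parts that hits a sign factor produces a conditioning $f(t_k)=0$, and the resulting conditional expectation is again a mixed moment of values and signs of a Gaussian vector; the recursion does not terminate in pairwise quantities for a general covariance (as the paper notes, no such formula exists for general Gaussians beyond $n=2,3$). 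The mechanism that closes the recursion here is Lemma~\ref{lem:conditional}: for the kernel $(1-st)^{-1}$, conditioning on zeros is the same in law as multiplying $f$ by the explicit factors $\mu(\cdot,t_i)$, which converts the conditional expectation back into an \emph{unconditional} moment of the same type with computable prefactors $q_i(\bm t)$. Your proposal never invokes this identity, so the ``special Cauchy structure'' you correctly flag as essential is never actually used. Second, the identity \eqref{eq:pfaffian-hafnian} applies only to $\hf\ABS{(1-x_ix_j)^{-1}}$, not to a matrix mixing $\Sigma(\bm t)$ with arcsine entries; in the paper it is usable only because one first takes \emph{all} $2n$ derivatives of the sign moment (Proposition~\ref{prop:differential}), which kills every sign factor and leaves a pure Wick Hafnian of the covariance.

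The paper's actual route supplies exactly the pieces your plan is missing. It first proves Theorem~\ref{thm:sgn} by computing the full mixed derivative $\partial^{2n}/\partial t_1\cdots\partial t_{2n}$ of the $2n$-point sign moment (Lemmas~\ref{lem:GaussianProcess}, \ref{lem:conditional}, \ref{lem:key-lem}, then Wick, Cauchy, and \eqref{eq:pfaffian-hafnian}), and then \emph{antidifferentiates} one variable at a time via the inductive argument of Lemma~\ref{lem:differential-pfaffian}, using the boundary conditions at $t_{2n}\to 1$ (Lemmas~\ref{lem:sgn-limit}, \ref{lem:Z-limit}) to fix the constants of integration. Theorem~\ref{thm:absolute} is then obtained not from $E[\prod_j f(t_j)\sgn f(t_j)]$ at coincident points, but by doubling the points to $(t_1,s_1,\dots,t_n,s_n)$, differentiating the $2n$-point sign moment only in the $t$-variables (Corollary~\ref{cor:differential-pfaffian}), and letting $s_j\to t_j+0$ so that $f(t_j)\sgn f(s_j)\to|f(t_j)|$ on one side while the block Pfaffian converges to $\pf(\mathbb{K}(t_i,t_j))$ on the other. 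Without the antidifferentiation step and the point-doubling/coalescence device, your argument cannot get from the Wick--IKO computation (which only controls a top-order derivative) back to the undifferentiated moment in the statement.
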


\begin{theorem} \label{thm:sgn}
For $(t_1,\dots,t_{2n}) \in \mf{X}_{2n}$,
we have
\begin{align}
\lefteqn{E[ \sgn f(t_1) \sgn f(t_2) \cdots \sgn f(t_{2n}) ]} \notag \\
&= \ABS{\frac{2}{\pi}}^{n} 
\prod_{1 \le i<j \le 2n} \sgn (t_j-t_i) \cdot
\pf \ABS{ \mathbb{K}_{22}(t_i,t_j)}_{i,j=1}^{2n}. 
\label{eq:sgn-pfaffian-formula0}
\end{align}
In particular, if $-1<t_1< t_2< \cdots <t_{2n} <1$, then
\begin{equation}
E[ \sgn f(t_1) \sgn f(t_2) \cdots \sgn f(t_{2n}) ]
=\ABS{\frac{2}{\pi}}^{n} 
\pf \ABS{ \arcsin c(t_i,t_j) }_{1 \le i<j \le 2n},
\label{eq:sgn-pfaffian-formula} 
\end{equation}
where $c(s,t)$ is defined in \eqref{eq:definition-c}.
\end{theorem}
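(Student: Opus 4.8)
The plan is to prove the ordered statement \eqref{eq:sgn-pfaffian-formula} and deduce the general one \eqref{eq:sgn-pfaffian-formula0} from it. For the reduction, note that the left-hand side of \eqref{eq:sgn-pfaffian-formula0} is symmetric in $t_1,\dots,t_{2n}$, while $\mathbb{K}_{22}$ is skew symmetric, since $\sgn(t-s)$ is odd and $\arcsin c(s,t)$ is symmetric. Choosing a permutation $\pi$ with $t_{\pi(1)}<\dots<t_{\pi(2n)}$, the alternating property of the Pfaffian gives $\pf(\mathbb{K}_{22}(t_{\pi(i)},t_{\pi(j)}))_{i,j}=\epsilon(\pi)\,\pf(\mathbb{K}_{22}(t_i,t_j))_{i,j}$, and $\epsilon(\pi)=\prod_{i<j}\sgn(t_j-t_i)$; since $\mathbb{K}_{22}(t_{\pi(i)},t_{\pi(j)})=\arcsin c(t_{\pi(i)},t_{\pi(j)})$ for $i<j$, applying \eqref{eq:sgn-pfaffian-formula} to the increasing tuple and using symmetry of the left-hand side yields \eqref{eq:sgn-pfaffian-formula0}. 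From now on assume $-1<t_1<\dots<t_{2n}<1$, and write $S_{2n}(\bm t)=E[\sgn f(t_1)\cdots\sgn f(t_{2n})]$ and $P_{2n}(\bm t)=(2/\pi)^n\pf(A)$, where $A$ is the $2n\times 2n$ skew symmetric matrix with $A_{ij}=\arcsin c(t_i,t_j)$ for $i<j$.

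I would prove $S_{2n}=P_{2n}$ by induction on $n$. For $n=1$ this is the classical arcsine identity $E[\sgn f(t_1)\sgn f(t_2)]=\frac{2}{\pi}\arcsin c(t_1,t_2)$ for a centered Gaussian pair with correlation $c(t_1,t_2)$, which is exactly $P_2$. For the inductive step, fix $t_1<\dots<t_{2n-1}$ and regard both sides as functions of the top variable $s:=t_{2n}\in(t_{2n-1},1)$; by the fundamental theorem of calculus it suffices to check (i) that $S_{2n}$ and $P_{2n}$ have the same $s$-derivative on $(t_{2n-1},1)$, and (ii) that they agree as $s\downarrow t_{2n-1}$. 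For (ii): as $s\downarrow t_{2n-1}$ one has $\sgn f(t_{2n-1})\sgn f(s)\to 1$ almost surely, so $S_{2n}\to S_{2n-2}(t_1,\dots,t_{2n-2})$; on the other side $c(t_{2n-1},s)\to 1$, so $\arcsin c(t_{2n-1},s)\to\pi/2$ while the remaining entries of rows $2n-1$ and $2n$ coincide, and a short cofactor computation gives $\pf(A)\to\frac{\pi}{2}\pf(A^{(2n-1)})$, where $A^{(2n-1)}$ deletes rows and columns $2n-1,2n$; hence $P_{2n}\to P_{2n-2}$, and (ii) follows from the induction hypothesis $S_{2n-2}=P_{2n-2}$.

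For (i) I would differentiate $S_{2n}$ in $s$. Writing $\frac{d}{ds}\sgn f(s)=2f'(s)\delta(f(s))$, Proposition~\ref{prop:differential} yields a Kac-Rice type expression $\partial_s S_{2n}=2E[f'(s)\delta(f(s))\prod_{i=1}^{2n-1}\sgn f(t_i)]$; conditioning on $f(s)=0$, applying the identity in law for $(f,f')$ given $f(s)=0$, and integrating by parts in the single Gaussian factor $f'(s)$ reduces this to a sum over $j\in\{1,\dots,2n-1\}$ of an analytic prefactor times a $(2n-2)$-point sign expectation over $\{t_\ell:\ell\ne j\}$. On the Pfaffian side I would expand $\partial_s\pf(A)$ along the last row, $\partial_s\pf(A)=\sum_{j=1}^{2n-1}(\pm)\,\partial_s\arcsin c(t_j,s)\,\pf(A^{(j)})$, with $\partial_s\arcsin c(t_j,s)=\partial_s c(t_j,s)/|\mu(t_j,s)|$, using $1-c^2=\mu^2$ from \eqref{eq:definition-c}. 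Each cofactor $\pf(A^{(j)})$ is, by the induction hypothesis, the $(2n-2)$-point sign expectation over $\{t_\ell:\ell\ne j\}$ up to the factor $(2/\pi)^{n-1}$, and one checks that the probabilistic prefactor—the density of $f(s)$ at $0$ times the conditional covariance of $f'(s)$ and $f(t_j)$—collapses to exactly $\frac{2}{\pi}\partial_s\arcsin c(t_j,s)$. Matching the two expansions term by term gives $\partial_s S_{2n}=\partial_s P_{2n}$ and closes the induction.

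The main obstacle is the conditioning step. Conditioning on $f(s)=0$ produces a \emph{different} Gaussian process, so the induction hypothesis does not apply to it verbatim; the content of the identity in law of Section~4 is precisely that $E[f'(s)\prod_{i<2n}\sgn f(t_i)\mid f(s)=0]$ can be re-expressed through \emph{unconditional} sign-products of the original process, so that the cofactors reassemble into the original $\arcsin c(t_i,t_j)$ values rather than conditional ones. Verifying that the density and conditional-covariance prefactors collapse to $\frac{2}{\pi}\partial_s\arcsin c(t_j,s)$ is where the special algebra of $\sigma(s,t)=(1-st)^{-1}$---the identity $c^2+\mu^2=1$ together with the potential relation \eqref{derivative} expressing $\mathbb{K}$ through derivatives of $\mathbb{K}_{22}$---is indispensable. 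For a generic covariance the recursion would not close into a Pfaffian, which is exactly why \eqref{eq:sgn-pfaffian-formula} is a special feature of this process.
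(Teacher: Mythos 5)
Your argument is correct, but it takes a genuinely different route from the paper's. The paper first computes the \emph{full} mixed derivative $\partial^{2n}/\partial t_1\cdots\partial t_{2n}$ of the sign moment (Proposition~\ref{prop:differential}): Lemma~\ref{lem:key-lem} with $m=0$ reduces it to $E[f(t_1)\cdots f(t_{2n})]$, Wick's formula turns that into a Hafnian, and the Pfaffian--Hafnian identity \eqref{eq:pfaffian-hafnian} converts the result into $\pf(\mathbb{K}_{11}(t_i,t_j))$; the theorem is then recovered by anti-differentiating one variable at a time through the interpolating matrices $\mathbb{L}^{[k]}$ (Lemma~\ref{lem:differential-pfaffian}), with the integration constants killed by the boundary behaviour as $t_{2n}\to 1$ (Lemmas~\ref{lem:sgn-limit} and~\ref{lem:Z-limit}). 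You instead induct on $n$: you differentiate only in the top variable $s=t_{2n}$ (note this is Lemma~\ref{lem:key-lem} with $n=1$, $m=2n-1$, not Proposition~\ref{prop:differential} as you cite), use Gaussian integration by parts together with the conditional identity in law of Lemma~\ref{lem:conditional} to write $\partial_s S_{2n}=\frac{2}{\pi}\sum_{j}(-1)^{j}\mathbb{K}_{12}(s,t_j)\,S_{2n-2}(\{t_\ell\}_{\ell\ne j})$, match this term by term with the last-row expansion of $\partial_s\pf(A)$ via the induction hypothesis, and fix the constant by the coalescence limit $s\downarrow t_{2n-1}$, where $\arcsin c(t_{2n-1},s)\to\pi/2$ and two rows of $A$ merge so that $\pf(A)\to\frac{\pi}{2}\pf(A^{(2n-1)})$. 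I checked the sign and prefactor bookkeeping in your step (i) and it does close, using exactly the relation $\mathbb{K}_{12}=\partial_s\mathbb{K}_{22}$ from \eqref{derivative} and $1-c^2=\mu^2$; your step (ii) is also sound (dominated convergence plus $P(f(t_{2n-1})=0)=0$). What your route buys is a more self-contained proof of Theorem~\ref{thm:sgn}: it needs only the $n=1$ arcsine law, Stein's identity, and the Pfaffian row expansion, and it avoids the Ishikawa--Kawamuko--Okada identity entirely for this theorem. What it gives up is the machinery the paper reuses downstream: the mixed-derivative Pfaffians $\pf\mathbb{L}^{I}$ of Corollary~\ref{cor:differential-pfaffian} are precisely what the paper needs to deduce Theorems~\ref{thm:absolute} and~\ref{thm:correlation}, so with your approach those would still require a separate argument along the lines of Lemma~\ref{lem:differential-pfaffian}.
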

 
We can easily see that
$E[\sgn f(t_1) \cdots \sgn f(t_n)]=0$ when $n$ is odd.
More generally, 
if $(X_1,\dots,X_n)$ is a centered real Gaussian vector, then
$E[\sgn X_1 \cdots \sgn X_n]=0$ for $n$ odd.
The formula \eqref{eq:sgn-pfaffian-formula0} with $n=1$ says
that $E[\sgn f(s) \sgn f(t)]= \frac{2}{\pi} \arcsin c(s,t)$, and hence
\eqref{eq:sgn-pfaffian-formula} can be rewritten
as 
\[
E[ \sgn f(t_1) \sgn f(t_2) \cdots \sgn f(t_{2n}) ]
= \pf \ABS{ E[\sgn f(t_i) \sgn f(t_j)] }_{1 \le i<j \le 2n}.
\]
As explained later, the Wick formula \eqref{wick} 
provides us a similar formula for products of real Gaussian random
variables, however, 
such neat formulas for $E[|X_1 X_2 \cdots X_n|]$ are not known 
for general $n$ except the cases with $n=2,3$ (\cite{Nabeya1,Nabeya2}).
Similarly, there is no known formula for 
$E[\sgn X_1 \sgn X_2 \cdots \sgn X_{2n}]$
except the $n=1$ case 
\[
E[\sgn X_1 \sgn X_2] = \frac{2}{\pi} \arcsin
\frac{\sigma_{12}}{\sqrt{\sigma_{11} \sigma_{22}}}, 
\]
where $\sigma_{ij} = E[X_i X_j]$ for $i,j=1,2$. 
Theorem \ref{thm:absolute} and Theorem \ref{thm:sgn} state that
the moments $E[|X_1 \cdots X_n|]$ and $E[\sgn X_1 \cdots \sgn X_n]$
have Pfaffian expressions if the covariance matrix of 
the real Gaussian vector $(X_1,\dots,X_n)$
is of the form $((1-t_i t_j)^{-1})_{i,j=1}^n$.

\subsection{Complex zero correlations}

The complex zero distribution of $f$ also forms
a Pfaffian point process.
Put
$\mathbb{D}_+=\{z \in \mathbb{C} \ | \ |z|<1, \ \Im z>0\}$,
the upper half of the open unit disc.
We write $\ii=\sqrt{-1}$.

\begin{theorem}\label{thm:c-correlation}
Let $\rho^{\mathrm{c}}_n$ be the $n$-point correlation function
for complex zeros of $f$.
For $z_1, z_2,\dots, z_n \in \mathbb{D}_+$, 
\[
 \rho^{\mathrm{c}}_n(z_1,\dots,z_n) = 
\frac{1}{(\pi \ii)^n} \prod_{j=1}^n \frac{1}{|1-z_j^2|} 
\cdot \pf(\mathbb{K}^{\mathrm{c}}(z_i,z_j))_{i,j=1}^n, 
\]
where $\mathbb{K}^{\mathrm{c}}(z,w)$ is the $2 \times 2$ matrix kernel 
\def\zbar{\bar{z}}
\def\wbar{\bar{w}}
\[
\mathbb{K}^{\mathrm{c}}(z,w) = 
\begin{pmatrix}
 \frac{z-w}{(1-zw)^2} &  \frac{z-\bar{w}}{(1-z \bar{w})^2} \\
\frac{\bar{z}-w}{(1-\bar{z} w)^2} &  \frac{\bar{z}-\bar{w}}{(1-\bar{z} \bar{w})^2} 
\end{pmatrix}.
\]
\end{theorem}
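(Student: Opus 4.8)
The plan is to imitate the route leading to Theorem~\ref{thm:correlation}, but with the one--dimensional Kac--Rice density for real zeros replaced by its complex analogue, and to exploit the reality of the coefficients $\{a_k\}$ to remove the absolute values that produced the $\arcsin$ terms of the real case. First I would write down Hammersley's formula for the zeros of the Gaussian analytic function $f$: since $f$ is almost surely analytic on $\mathbb{D}$ with simple zeros, and since for a holomorphic map the Jacobian of $(\Re f,\Im f)$ equals $|f'|^2\ge 0$, the Kac--Rice representation reads
\[
\rho^{\mathrm c}_n(z_1,\dots,z_n)
= E\Bigl[\,\textstyle\prod_{j=1}^n |f'(z_j)|^2 \;\Big|\; f(z_1)=\cdots=f(z_n)=0\Bigr]\cdot p_{\mathbf f}(0),
\]
where $p_{\mathbf f}(0)$ is the value at the origin of the density of the (real) Gaussian vector $\mathbf f=(f(z_1),\dots,f(z_n))\in\mathbb{C}^n$.

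The decisive step is to use conjugation symmetry. Because the $a_k$ are real, $\overline{f(z)}=f(\bar z)$ and $\overline{f'(z)}=f'(\bar z)$, so $|f'(z_j)|^2=f'(z_j)\,f'(\bar z_j)$ is a genuine product of field values rather than a modulus, and the event $\{f(z_j)=0\}$ is the same as $\{f(z_j)=f(\bar z_j)=0\}$. The entire computation therefore takes place on the $2n$ conjugate points $w_{2j-1}=z_j,\ w_{2j}=\bar z_j$ and is controlled by the single analytic kernel $\sigma(z,w)=E[f(z)f(w)]=(1-zw)^{-1}$ and its derivatives $E[f'(z)f(w)]=\partial_z\sigma$, $E[f'(z)f'(w)]=\partial_z\partial_w\sigma$. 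I would then evaluate the two Gaussian ingredients separately: the density $p_{\mathbf f}(0)$ is read off from the determinant of the $2n\times 2n$ matrix $(\sigma(w_a,w_b))$, which the Cauchy determinant formula of Section~3 collapses into $\prod_j|1-z_j^2|^{-1}$ times an explicit factor carrying the constants; and the conditional expectation $E\bigl[\prod_a f'(w_a)\mid f(w_b)=0\bigr]$ is expanded by the Wick formula \eqref{wick}, producing a Hafnian $\hf$ of the conditional $f'$--$f'$ covariances. Since all $2n$ factors occur to the first power, this expansion is \emph{polynomial} in the covariances --- there are no absolute values and hence no $\arcsin$ --- which is exactly why $\mathbb{K}^{\mathrm c}$ has purely rational entries.

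The third step is the Pfaffian assembly. The product (density)$\,\times\,$(Wick expansion) must be recast as a single Pfaffian, and here I would apply the Pfaffian--Hafnian identity of Ishikawa--Kawamuko--Okada \cite{IKO}, as in the proof of Theorem~\ref{thm:correlation}, to fuse the Hafnian of conditional covariances with the Cauchy determinant into $\pf\bigl(g(w_a,w_b)\bigr)_{a,b=1}^{2n}$ for the rational antisymmetric kernel $g(z,w)=\dfrac{z-w}{(1-zw)^2}$, which is the complex extension of the kernel $\mathbb{K}'_{11}$ from the Remark following Theorem~\ref{thm:correlation}. Finally, grouping the $2n$ points back into the conjugate pairs $(z_j,\bar z_j)$ turns this $2n\times 2n$ Pfaffian into $\pf(\mathbb{K}^{\mathrm c}(z_i,z_j))_{i,j=1}^n$, because the four evaluations $g(z,w),\,g(z,\bar w),\,g(\bar z,w),\,g(\bar z,\bar w)$ are precisely the four entries of the block $\mathbb{K}^{\mathrm c}(z,w)$.

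The step I expect to be hardest is this final reorganisation. Matching the \emph{conditioned} Wick/Hafnian structure to the closed-form kernel $g$ requires showing that all corrections coming from conditioning on $f(w_b)=0$ are absorbed exactly by the Cauchy-determinant prefactor through the IKO identity, and it demands careful bookkeeping of signs and of the constant $(\pi\ii)^{-n}$; the factor $\ii$ should arise from the Jacobian of the change of coordinates from $(\Re f,\Im f)$ to the holomorphic pair $(f(z),f(\bar z))$ that underlies the conjugate-pair reduction. Verifying that $g(z,w)=\frac{z-w}{(1-zw)^2}$ is the correct reduced kernel --- rather than some other derivative combination of $\sigma$ --- is the computational heart of the argument; once it is in place, everything else is the holomorphic analogue of the real-zero calculation, made strictly simpler by the absence of absolute values.
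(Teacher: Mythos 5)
Your proposal follows essentially the same route as the paper: Hammersley's complex Kac--Rice formula, reduction to the $2n$ conjugate points via $\overline{f(z)}=f(\bar z)$, the conditional-law/Wick computation yielding a Hafnian of $(1-w_aw_b)^{-1}$, the Cauchy determinant for the Gaussian density, the Ishikawa--Kawamuko--Okada Pfaffian--Hafnian identity producing $\pf\bigl(\tfrac{w_a-w_b}{(1-w_aw_b)^2}\bigr)$, and the final regrouping into the $2\times 2$ blocks $\mathbb{K}^{\mathrm{c}}$. The only cosmetic difference is that the paper factors the conditioning out front via the identity in law $(f'(z_i)\mid f=0)\inlaw(q_i(\bm{z})f(z_i))$ rather than Wick-expanding conditional covariances directly, but the two bookkeepings are equivalent.
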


For example, the first two correlations are given by
\begin{align*}
\rho_1^{\mathrm{c}}(z)=& \frac{|z-\overline{z}|}{\pi |1-z^2| (1-|z|^2)^2}, \\
\rho_2^{\mathrm{c}}(z,w)
=& \rho_1^{\mathrm{c}}(z)\rho_1^{\mathrm{c}}(w)
+ \frac{1}{\pi^2|1-z^2||1-w^2|} 
\ABS{\left| \frac{z-w}{1-zw}\right|^2 
- \left| \frac{z-\bar{w}}{1-z\bar{w}}\right|^2}. 
\end{align*}
It is easy to verify that $\rho_2^{\mathrm{c}}(z,w) < 
\rho_1^{\mathrm{c}}(z)\rho_1^{\mathrm{c}}(w)$ for $z,w \in
\mathbb{D}_+$, which implies negative correlation as well as the case of
real zeros. 

As we mentioned, Theorem \ref{thm:correlation} and Theorem \ref{thm:c-correlation}
are obtained independently in \cite{F} via random matrix theory,
but Theorem \ref{thm:absolute} and Theorem \ref{thm:sgn} are new.

\section{Cauchy's determinants and Wick formula}

In this short section, we review Cauchy's determinants and 
the Wick formula, which are essential throughout this paper.

\subsection{Cauchy's determinant and its variations}

The following identity for a determinant,
the so-called Cauchy determinant identity,
is well known in combinatorics, see, e.g., \cite[Proposition 4.2.3]{CST}.
\begin{equation}  \label{eq:Cauchy}
\det \ABS{ \frac{1}{1-x_i y_j}}_{i,j=1}^n
= \frac{\prod_{1 \le i<j \le n} (x_i-x_j)(y_i-y_j)}
{\prod_{i=1}^n \prod_{j=1}^n (1-x_i y_j)}.
\end{equation}
Here the $x_i$, $y_j$ are formal variables, but
we will assume that they are complex numbers 
with absolute values smaller than $1$
when we apply formulas contained in this subsection.
For each $i=1,2,\dots,n$, we 
define $q_i(\bm{x})=q_i(x_1,\dots,x_n)$ by
\begin{equation} \label{eq:definition-q}
q_i(\bm{x})= 
\frac{1}{1-x_i^2} \prod_{\begin{subarray}{c}  1 \le k \le n \\ k \not=i \end{subarray}}
\frac{x_i-x_k}{1-x_i x_k}.
\end{equation}
Using \eqref{eq:Cauchy},
we have
\begin{equation}  \label{eq:product-of-q}
q_1(\bm{x}) q_2(\bm{x}) \cdots q_n (\bm{x}) 
=(-1)^{n(n-1)/2} \det \ABS{ \frac{1}{1-x_i x_j}}_{i,j=1}^n
\end{equation}

Recall the definition of Hafnians, which are sign-less analogs of Pfaffians.
For a $2n \times 2n$ symmetric matrix $A=(a_{ij})_{i,j=1}^{2n}$, 
the Hafnian of $A$ is defined by 
\begin{equation}
\hf A = \sum_{\eta} a_{\eta(1) \eta(2)} a_{\eta(3) \eta(4)} \cdots a_{\eta(2n-1) \eta(2n)},
\label{hafnian}
\end{equation}
summed over all permutations 
$\eta$ on $\{1,2,\dots, 2n\}$
satisfying $\eta(2i-1)<\eta(2i)$ $(i=1,2,\dots,n)$ and 
$\eta(1)< \eta(3)< \cdots <\eta(2n-1)$.

A Pfaffian version of Cauchy's determinant identity is Schur's Pfaffian identity
(see, e.g., \cite{IKO}):
\[
\pf \ABS{ \frac{x_i-x_j}{1-x_i x_j}}_{i,j=1}^{2n}=\prod_{1 \le i<j \le 2n} \frac{x_i-x_j}{1-x_ix_j}.
\]
The following formula due to 
Ishikawa, Kawamuko, and Okada \cite{IKO}
will be an important factor in our proofs of theorems. 
\begin{equation}
\prod_{1 \le i<j \le 2n} \frac{x_i-x_j}{1-x_ix_j} 
\cdot \hf \ABS{ \frac{1}{1-x_i x_j} }_{i,j=1}^{2n}
= \pf \ABS{ \frac{x_i-x_j}{(1-x_ix_j)^2} }_{i,j=1}^{2n}.
\label{eq:pfaffian-hafnian}
\end{equation}

\subsection{Wick formula}

We recall the method for computations of
expectations of polynomials in real Gaussian random variables. 
Let $Y_1,\dots,Y_{n}$ be 
a centered real Gaussian random vector.
Then $E[Y_1 \cdots Y_n]=0$ if $n$ is odd, and 
\begin{equation}
E[Y_1 Y_2 \cdots Y_{n}]= \hf (E[Y_i Y_j])_{i,j=1}^n
\label{wick}
\end{equation}
if $n$ is even.
For example,
$E[Y_1 Y_2 Y_3 Y_4] = E[Y_1 Y_2] E [Y_3 Y_4]+
E[Y_1 Y_3] E [Y_2 Y_4]+ E[Y_1 Y_4] E [Y_2 Y_3]$.
See, e.g., survey \cite{Z} for details.

\section{Derivatives of sign moments}

In this section, we provide
a preliminary version of Pfaffian formulas for Theorem~\ref{thm:sgn}.

\subsection{Derivatives of real Gaussian processes}

The derivative of the product-moment of signs of 
a smooth real Gaussian process
is given by a conditional expectation in the following way.

\begin{lemma} \label{lem:GaussianProcess}
Let $\{X(t)\}_{-1<t<1}$ be a smooth real Gaussian process
with covariance kernel $K$.
Let $(t_1,t_2,\dots,t_n,s_1,s_2,\dots,s_m) \in \mf{X}_{n+m}$
and suppose that $\det K(\bm{t})= \det (K(t_i, t_j))_{i,j=1}^n$
does not vanish.
Then,
\begin{align*}
&\frac{\partial^{n}}{\partial t_1 \partial t_2 \cdots
\partial t_{n}}
E[ \sgn X(t_1) \cdots \sgn X(t_{n}) 
\sgn X(s_1) \cdots \sgn X(s_{m})] \\
=& \ABS{\frac{2}{\pi}}^{\frac{n}{2}} (\det K(\bm{t}))^{-\frac{1}{2}}
E[X'(t_1) \cdots X'(t_n)\sgn X(s_1) \cdots \sgn X(s_{m})
\ | \ X(t_1)= \cdots =X(t_n)=0].
\end{align*}
\end{lemma}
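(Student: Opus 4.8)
The plan is to read the left-hand side as a Kac--Rice type derivative, in which differentiating a sign function produces a Dirac mass. Heuristically $\frac{\partial}{\partial t}\sgn X(t)=2\,\delta_0(X(t))\,X'(t)$ in the sense of distributions, so applying all $n$ derivatives and interchanging them with the expectation should give
\[
\frac{\partial^{n}}{\partial t_1\cdots\partial t_n}
E\Big[\prod_{i=1}^n \sgn X(t_i)\prod_{j=1}^m \sgn X(s_j)\Big]
= 2^n\, E\Big[\prod_{i=1}^n \delta_0(X(t_i))\,X'(t_i)\prod_{j=1}^m \sgn X(s_j)\Big].
\]
The right-hand side is then rewritten by conditioning on $X(t_1)=\cdots=X(t_n)=0$: the product of delta masses contributes the joint density of the Gaussian vector $(X(t_1),\dots,X(t_n))$ at the origin, namely $(2\pi)^{-n/2}(\det K(\bm t))^{-1/2}$, while the remaining factors form the conditional expectation appearing in the statement. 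Since $2^n(2\pi)^{-n/2}=(2/\pi)^{n/2}$, this reproduces exactly the claimed identity, so the whole task is to make the two heuristic steps rigorous.

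To do so I would mollify the sign function. Fix a smooth, even, compactly supported probability density $\rho$, set $\rho_\varepsilon(x)=\varepsilon^{-1}\rho(x/\varepsilon)$ and $\phi_\varepsilon=\sgn*\rho_\varepsilon$; then $\phi_\varepsilon$ is smooth with $\phi_\varepsilon'=2\rho_\varepsilon$ and $\phi_\varepsilon\to\sgn$ pointwise off the origin. For each fixed $\varepsilon>0$ every quantity is smooth, and since $X(t)$ and $X'(t)$ are Gaussian and hence have finite moments of all orders, differentiation under the expectation is legitimate by dominated convergence. The chain rule then yields
\[
\frac{\partial^{n}}{\partial t_1\cdots\partial t_n}
E\Big[\prod_{i} \phi_\varepsilon(X(t_i))\prod_{j} \sgn X(s_j)\Big]
= 2^n\, E\Big[\prod_{i}\rho_\varepsilon(X(t_i))\,X'(t_i)\prod_{j}\sgn X(s_j)\Big].
\]

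The remaining work is to pass to the limit $\varepsilon\to0$. On the left, $\phi_\varepsilon(X(t_i))\to\sgn X(t_i)$ boundedly, so the expectations converge, and a uniform bound on the $\varepsilon$-derivatives lets me exchange the limit with $\partial_{t_1}\cdots\partial_{t_n}$. On the right, I condition on the vector $X(\bm t)=(X(t_1),\dots,X(t_n))$ to write the expectation as
\[
2^n\int_{\mathbb{R}^n}\prod_i\rho_\varepsilon(x_i)\;G(\bm x)\,p_{X(\bm t)}(\bm x)\,d\bm x,
\qquad
G(\bm x)=E\Big[\prod_i X'(t_i)\prod_j\sgn X(s_j)\,\Big|\,X(\bm t)=\bm x\Big].
\]
Because $\det K(\bm t)\neq0$, conditionally on $X(\bm t)=\bm x$ the vector $(X'(t_1),\dots,X'(t_n),X(s_1),\dots,X(s_m))$ is Gaussian with mean linear in $\bm x$ and fixed covariance, so $\bm x\mapsto G(\bm x)\,p_{X(\bm t)}(\bm x)$ is continuous (indeed smooth) near the origin. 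As $\prod_i\rho_\varepsilon(x_i)$ is an approximate identity, the integral converges to $2^n\,G(\bm 0)\,p_{X(\bm t)}(\bm 0)$, which is the asserted conditional expectation multiplied by $(2/\pi)^{n/2}(\det K(\bm t))^{-1/2}$.

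The main obstacle I anticipate is justifying the interchange of $\lim_{\varepsilon\to0}$ with the differential operator $\partial_{t_1}\cdots\partial_{t_n}$, together with the continuity of the regression function $G$ at the origin; both hinge on controlling the conditional law of the derivatives $X'(t_i)$ given $X(\bm t)=\bm x$ uniformly for $\bm x$ near $\bm 0$. This is precisely where the hypothesis $\det K(\bm t)\neq0$ and the smoothness of the process are used to keep the joint law of $(X(\bm t),X'(\bm t))$ non-degenerate. Once these estimates are secured, the elementary constant bookkeeping $2^n(2\pi)^{-n/2}=(2/\pi)^{n/2}$ completes the proof.
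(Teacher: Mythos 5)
Your core computation is exactly the paper's: differentiate $\sgn$ to get $2\delta_0$, rewrite the resulting expectation as a conditional expectation times the density $p_{\bm{t}}(\bm 0)=(2\pi)^{-n/2}(\det K(\bm t))^{-1/2}$ of $(X(t_1),\dots,X(t_n))$ at the origin, and collect $2^n(2\pi)^{-n/2}=(2/\pi)^{n/2}$. The only divergence is in how rigor is supplied: the paper explicitly presents this as a heuristic and outsources the justification to Watanabe's theory of generalized Wiener functionals (Malliavin calculus), whereas you propose a self-contained mollification of $\sgn$ followed by a limit $\varepsilon\to 0$. Your route is more elementary, and you correctly identify the two points that carry the real burden --- exchanging $\lim_{\varepsilon\to 0}$ with $\partial_{t_1}\cdots\partial_{t_n}$ and the continuity at $\bm 0$ of the regression function $\bm x\mapsto G(\bm x)p_{X(\bm t)}(\bm x)$, both resting on the non-degeneracy $\det K(\bm t)\neq 0$ --- but these estimates remain sketched rather than proved, so your argument sits at essentially the same level of completeness as the paper's, just with a different (and arguably more transparent) plan for filling the gap.
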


\begin{proof}
We will give a heuristic proof.
The derivative of $\sgn t$ is 
$\frac{\partial}{\partial t} \sgn t= 2\delta_0(t)$,
where $\delta_0(t)$ is Dirac's delta function at $0$.
Hence, if we abbreviate as 
$Y(\bm{s})= \sgn X(s_1) \cdots \sgn X(s_m)$, then
\begin{align*}
\lefteqn{\frac{\partial^{n}}{\partial t_1 \partial t_2 \cdots
\partial t_{n}}
E[ \sgn X(t_1) \cdots \sgn X(t_{n}) \cdot Y(\bm{s})]}  \\
&= 2^n E[\delta_0(X(t_1)) X'(t_1) \cdots \delta_0(X(t_n)) X'(t_n)
\cdot Y(\bm{s})] \\
&= 2^n E[ X'(t_1) \cdots X'(t_n) \cdot Y(\bm{s}) \ | \ 
X(t_1)= \cdots =X(t_n)=0] \cdot p_{\bm{t}}(\bm{0}),
\end{align*}
where $p_{\bm{t}}(\bm{0})$ is the density 
of the Gaussian vector $(X(t_1),\dots,X(t_n))$ at $(0,\dots,0)$.
Since $p_{\bm{t}}(\bm{0})=(2\pi)^{-n/2} (\det K(\bm{t}))^{-1/2}$,
the claim follows.

The above formal computation can be justified by using 
Watanabe's generalized Wiener functionals in the framework of Malliavin
 calculus over abstract Wiener spaces \cite{W, W-AP}. 
\end{proof}

\subsection{Conditional expectations}

Recall the Gaussian power series $f$
defined in \eqref{eq:definition-f}
and functions $\sigma(s,t)$ and $\mu(s,t)$ defined in \eqref{eq:definition-sm}.
The process $\{f(t)\}_{-1 <t<1}$ is 
centered real Gaussian with covariance kernel $\sigma(s,t)$.
The following identity in law is a crucial property 
which the Gaussian process with covariance kernel $\sigma(s,t)$ enjoys. 

\begin{lemma} \label{lem:conditional}
For given $(t_1,t_2,\dots, t_n) \in \mf{X}_n$, we have 
\begin{equation}
(f | f(t_1)= \cdots = f(t_n) =0) 
\inlaw \mu(\cdot, \bm{t}) f,
\label{inlaw1}
\end{equation}
where $\mu(s, \bm{t}) = \prod_{i=1}^n \mu(s,t_i)$. 
Moreover, 
$$
(f, f'(t_i), i=1,\dots,n | f(t_1)= \cdots = f(t_n) =0) 
\inlaw (\mu(\cdot, \bm{t}) f, q_i(\bm{t}) f(t_i), i=1,\dots, n), 
$$
where $q_i(\bm{t})=q_i(t_1,\dots,t_n)$ is defined in \eqref{eq:definition-q}.
\end{lemma}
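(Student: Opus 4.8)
The plan is to explicitly compute the conditional law of the Gaussian vector-and-process system by using the standard Gaussian conditioning formula together with the special algebraic structure of the covariance kernel $\sigma(s,t)=(1-st)^{-1}$. Since $\{f(t)\}$ is centered Gaussian, conditioning on $f(t_1)=\cdots=f(t_n)=0$ yields another centered Gaussian process, so it suffices to match the covariance structure of the left-hand side with that of the claimed right-hand side. I would set up the conditioning via the orthogonal decomposition $f(s) = \widehat{f}(s) + \widetilde{f}(s)$, where $\widehat{f}(s)$ is the $L^2$-projection of $f(s)$ onto the span of $f(t_1),\dots,f(t_n)$ and $\widetilde{f}(s)$ is the residual; then the conditioned process equals $\widetilde{f}$ in law, and its covariance is the Schur complement.

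First I would verify the first identity \eqref{inlaw1}. The key computation is to show that the covariance kernel of the conditioned process equals that of $\mu(\cdot,\bm{t})f$, i.e. that
\[
\sigma(s,t) - \trans{\bm{v}(s)} \Sigma(\bm{t})^{-1} \bm{v}(t)
= \mu(s,\bm{t})\,\mu(t,\bm{t})\,\sigma(s,t),
\]
where $\bm{v}(s)=(\sigma(s,t_1),\dots,\sigma(s,t_n))^{\mathrm{t}}$ and $\Sigma(\bm{t})$ is defined in \eqref{eq:definition-Sigma}. The right-hand side factor $\mu(s,\bm{t})\mu(t,\bm{t})$ is exactly a product of $\frac{(s-t_i)(t-t_i)}{(1-st_i)(1-tt_i)}$ over $i$, and the cleanest way to establish this is a Cauchy-type determinant manipulation: the Schur complement of the scalar entry $\sigma(s,t)$ in the $(n+1)\times(n+1)$ matrix $\bigl(\sigma(u_i,u_j)\bigr)$ with $u$'s ranging over $\{s,t_1,\dots,t_n\}$ (and similarly with $t$) is a ratio of determinants, each of Cauchy form \eqref{eq:Cauchy}. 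Computing those ratios produces precisely the product $\prod_i \frac{s-t_i}{1-st_i}$, which is $\mu(s,\bm{t})$. So this reduces to the Cauchy identity already recalled in the paper.

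Second, for the joint statement I would differentiate. Because conditioning commutes with the (deterministic) linear operation of differentiation in $s$, the conditioned pair $(f, f'(t_i))$ has the same law as $(\mu(\cdot,\bm{t})f, \partial_s[\mu(s,\bm{t})f]|_{s=t_i})$. The crucial simplification is that when we evaluate $\partial_s[\mu(s,\bm{t})f(s)]$ at $s=t_i$, the factor $\mu(t_i,t_i)=0$ kills the term where the derivative hits $f$, so only the term where the derivative hits $\mu(\cdot,\bm{t})$ survives, giving $\mu'(t_i,\bm{t})\,f(t_i)$ where the derivative is in the first slot. I then need the algebraic identity $\frac{\partial}{\partial s}\mu(s,\bm{t})\big|_{s=t_i} = q_i(\bm{t})$, which follows from differentiating the product $\prod_j \frac{s-t_j}{1-st_j}$ by the product rule: at $s=t_i$ every factor with $j\neq i$ is nonzero while the $j=i$ factor vanishes, so only the derivative of the $j=i$ factor contributes, yielding $\frac{1}{1-t_i^2}\prod_{k\neq i}\frac{t_i-t_k}{1-t_it_k}$, matching \eqref{eq:definition-q}.

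The main obstacle I expect is not any single algebraic step but the bookkeeping in the Schur-complement computation: one must track the determinant ratios carefully and confirm that the cross-terms assemble into the clean product form rather than leaving residual factors. The differentiation-commutes-with-conditioning claim also deserves care, since $f'(t_i)$ is only defined because $f$ is almost surely smooth on $(-1,1)$; I would justify it by noting that $(f(t_1),\dots,f(t_n),f(s),f'(t_i))$ is jointly Gaussian and that differentiation can be pulled outside the conditional expectation by the continuity of the conditional covariance, so that the residual process $\widetilde{f}$ inherits the factored form and its $s$-derivative at $t_i$ reduces as described.
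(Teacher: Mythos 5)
Your proposal is correct, and its second half (pulling differentiation through the conditioning by linearity, letting $\mu(t_i,\bm{t})=0$ kill the term where the derivative hits $f$, and computing $\frac{\partial}{\partial s}\mu(s,\bm{t})\big|_{s=t_i}=q_i(\bm{t})$ by the product rule) is exactly the paper's argument. For the first identity \eqref{inlaw1} you take a genuinely different computational route: you condition on all $n$ points at once and identify the Schur complement $\sigma(s,t)-\trans{\bm{v}(s)}\,\Sigma(\bm{t})^{-1}\bm{v}(t)$ as a ratio of Cauchy determinants, whereas the paper conditions on one point at a time, verifying only the elementary one-point identity $\sigma(x,y)-\sigma(x,t)\sigma(t,y)/\sigma(t,t)=\mu(x,t)\mu(y,t)\sigma(x,y)$ and then iterating, since the one-point reduction preserves the form of the kernel up to the factor $\mu(\cdot,t)$. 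Your route does close up: the bordered-determinant identity gives
\[
\sigma(s,t)-\trans{\bm{v}(s)}\,\Sigma(\bm{t})^{-1}\bm{v}(t)
=\frac{1}{\det\Sigma(\bm{t})}\det\!\left(\frac{1}{1-x_iy_j}\right)_{i,j=1}^{n+1},
\qquad \bm{x}=(s,t_1,\dots,t_n),\ \bm{y}=(t,t_1,\dots,t_n),
\]
and the numerator is a (generally non-symmetric) Cauchy matrix, so \eqref{eq:Cauchy} collapses the ratio to $\sigma(s,t)\mu(s,\bm{t})\mu(t,\bm{t})$ after the common factors $\prod_{i<j}(t_i-t_j)^2$ and $\prod_{i,j}(1-t_it_j)$ cancel; the one point to be careful about is that you need this two-variable version with distinct row and column index sets, not just the symmetric Gram-matrix Schur complement you describe. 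The trade-off: the paper's induction avoids all determinant bookkeeping and needs only a one-line algebraic check, while your one-shot computation makes the role of the Cauchy determinant (already used in the paper via \eqref{eq:product-of-q}) explicit and yields the $n$-point conditional kernel in closed form without an induction hypothesis. Both are complete proofs.
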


\begin{proof}
If a Gaussian process $X$ has a covariance kernel $K(x,y)$, then 
that of the Gaussian process $(X | X(t)=0)$ (i.e. $X$ given $X(t)=0$)
is equal to $K(x,y) - K(x,t)K(t,y)/K(t,t)$ 
whenever $K(t,t)>0$. 
In the case of the kernel $\sigma(x,y)$, we see that 
\[
 \sigma(x,y) - \frac{\sigma(x,t) \sigma(t,y)}{\sigma(t,t)} 
= \mu(x,t)\mu(y,t) \sigma(x,y). 
\]
This implies that $(f | f(t)=0) \inlaw \mu(\cdot, t) f$ as 
a process. Hence we obtain (\ref{inlaw1}) by induction. 
As $f'$ is a linear functional of $f$, 
we also have the identity in law as a Gaussian system 
\begin{align*}
(f, f' \ | \ f(t_1)= \cdots = f(t_n) =0)
&\inlaw 
(\mu(\cdot, \bm{t}) f, \mu'(\cdot, \bm{t}) f 
+ \mu(\cdot, \bm{t}) f').  
\end{align*}
Since $\mu(t_i, \bm{t}) = 0$ and $\mu'(t_i, \bm{t}) = q_i(\bm{t})$ for 
every $i=1,2,\dots,n$, 
we obtain the second equality in law. 
\end{proof}

\subsection{Pfaffian expressions for derivatives of signs}

The following lemma is a consequence of Lemmas \ref{lem:GaussianProcess} and \ref{lem:conditional}.

\begin{lemma} \label{lem:key-lem}
Let $(\bm{t},\bm{s})=(t_1,t_2,\dots,t_n,s_1,s_2,\dots,s_m) \in \mf{X}_{n+m}$.
Then 
\begin{align*}
\lefteqn{\frac{\partial^{n}}{\partial t_1 \partial t_2 \cdots
\partial t_{n}}
E[ \sgn f(t_1) \cdots \sgn f(t_{n}) 
\sgn f(s_1) \cdots \sgn f(s_{m})]} \\
&=  (-1)^{n(n-1)/2} \prod_{i=1}^n \prod_{j=1}^m \sgn (s_j-t_i) \\
& \qquad \times 
\ABS{\frac{2}{\pi}}^{n/2} 
(\det \Sigma(\bm{t}))^{1/2} E[f(t_1) \cdots f(t_n) 
\sgn f(s_1) \cdots \sgn f(s_m)]
\end{align*}
with $\Sigma(\bm{t})$ defined in \eqref{eq:definition-Sigma}.
\end{lemma}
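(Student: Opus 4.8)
The plan is to chain together the two preceding lemmas and the product formula \eqref{eq:product-of-q}. First I would apply Lemma~\ref{lem:GaussianProcess} to the process $X=f$, whose covariance kernel is $K=\sigma$, so that $\det K(\bm{t})=\det\Sigma(\bm{t})$; since $\bm{t}\in\mf{X}_n$ guarantees $\det\Sigma(\bm{t})>0$, the hypothesis is met. This rewrites the left-hand derivative as
\[
\left(\frac{2}{\pi}\right)^{n/2}(\det\Sigma(\bm{t}))^{-1/2}\,
E[f'(t_1)\cdots f'(t_n)\,\sgn f(s_1)\cdots\sgn f(s_m)\mid f(t_1)=\cdots=f(t_n)=0],
\]
reducing the problem to evaluating this conditional expectation.

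Next I would invoke the joint identity in law of Lemma~\ref{lem:conditional}, which replaces the conditioned system $(f,f'(t_i))$ by $(\mu(\cdot,\bm{t})f,\,q_i(\bm{t})f(t_i))$. Because $\sgn f(s_j)$ depends only on the value of the conditioned process at $s_j$, the identity in law converts the conditional expectation into the \emph{unconditional} one
\[
E\Big[\prod_{i=1}^n q_i(\bm{t})f(t_i)\cdot\prod_{j=1}^m\sgn\big(\mu(s_j,\bm{t})f(s_j)\big)\Big].
\]
The scalars $q_i(\bm{t})$ factor out, and their product is handled by \eqref{eq:product-of-q}, giving $\prod_i q_i(\bm{t})=(-1)^{n(n-1)/2}\det\Sigma(\bm{t})$.

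The one step requiring care is the sign factor. I would split $\sgn(\mu(s_j,\bm{t})f(s_j))=\sgn(\mu(s_j,\bm{t}))\,\sgn(f(s_j))$ and then observe that, since $1-s_jt_i>0$ for $s_j,t_i\in(-1,1)$, the sign of $\mu(s_j,t_i)=(s_j-t_i)/(1-s_jt_i)$ is exactly $\sgn(s_j-t_i)$. Hence $\prod_{j=1}^m\sgn(\mu(s_j,\bm{t}))=\prod_{i=1}^n\prod_{j=1}^m\sgn(s_j-t_i)$, which is precisely the prefactor in the statement; pulling out these constants leaves $E[f(t_1)\cdots f(t_n)\,\sgn f(s_1)\cdots\sgn f(s_m)]$.

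Finally, assembling the three pieces — the constant $(2/\pi)^{n/2}(\det\Sigma(\bm{t}))^{-1/2}$ from Lemma~\ref{lem:GaussianProcess}, the factor $(-1)^{n(n-1)/2}\det\Sigma(\bm{t})$ from the $q_i$'s, and the sign product — and using $(\det\Sigma(\bm{t}))^{-1/2}\det\Sigma(\bm{t})=(\det\Sigma(\bm{t}))^{1/2}$ yields the claimed identity. The argument is essentially mechanical given the two lemmas; the only genuinely delicate point is the sign bookkeeping, namely ensuring the $\sgn(s_j-t_i)$ factors emerge with the correct orientation and that the joint identity in law of Lemma~\ref{lem:conditional} is applied simultaneously to the derivative terms and to the values governing the sign terms.
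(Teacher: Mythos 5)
Your proposal is correct and follows essentially the same route as the paper: the authors likewise combine Lemma~\ref{lem:GaussianProcess} (with $X=f$), the joint identity in law of Lemma~\ref{lem:conditional}, and \eqref{eq:product-of-q}, with the sign factors $\sgn\mu(s_j,\bm{t})=\prod_i\sgn(s_j-t_i)$ extracted exactly as you describe. Nothing further is needed.
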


\begin{proof}
From Lemma \ref{lem:conditional} and \eqref{eq:product-of-q} we have
\begin{align*}
\lefteqn{E[f'(t_1) \cdots f'(t_n) \sgn f(s_1) \cdots \sgn f(s_m) \ | \ 
f(t_1)= \cdots =f(t_n)=0]} \\
&= \prod_{j=1}^m \sgn \mu(s_j, \bm{t}) \cdot 
\prod_{i=1}^n q_i(\bm{t}) \cdot E[f(t_1) \cdots f(t_n) \sgn f(s_1) \cdots 
\sgn f(s_m)] \\
&= \prod_{i=1}^n \prod_{j=1}^m \sgn(s_j-t_i) \cdot (-1)^{n(n-1)/2} 
\det \Sigma(\bm{t}) \cdot E[f(t_1) \cdots f(t_n) \sgn f(s_1) \cdots 
\sgn f(s_m)].
\end{align*}
We have finished the proof by Lemma \ref{lem:GaussianProcess} with $X(t)=f(t)$.
\end{proof}

\begin{proposition}  \label{prop:differential}
For  $\bm{t}=(t_1,\dots, t_{2n}) \in \mf{X}_{2n}$,
we have
\begin{align}
\lefteqn{\frac{\partial^{2n}}{\partial t_1 \partial t_2 \cdots
\partial t_{2n}}
E[ \sgn f(t_1) \sgn f(t_2) \cdots \sgn f(t_{2n}) ]}  
\notag \\
&= \ABS{\frac{2}{\pi}}^{n} 
\prod_{1 \le i<j \le 2n} \sgn (t_j-t_i) \cdot
\prod_{i=1}^{2n} \frac{1}{\sqrt{1-t_i^2}} \cdot 
\pf \ABS{ \frac{t_i-t_j}{(1-t_i t_j)^2}}_{i,j=1}^{2n}.
\label{eq:diffrential-pfaffian}
\end{align}
\end{proposition}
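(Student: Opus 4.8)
The plan is to combine the conditional-expectation reduction of Lemma~\ref{lem:key-lem} with the three algebraic identities collected in Section~3: the Wick formula \eqref{wick}, Cauchy's determinant \eqref{eq:Cauchy}, and the Pfaffian--Hafnian identity \eqref{eq:pfaffian-hafnian}. All the analytic content (the delta-function computation and the identity in law for the conditioned process) is already packaged into Lemma~\ref{lem:key-lem}, so what remains is essentially an exercise in evaluating two Gaussian quantities and matching factors.

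First I would specialize Lemma~\ref{lem:key-lem} to the case $m=0$ (no $s$-variables), with $2n$ playing the role of $n$. The empty sign product $\prod_i \prod_j \sgn(s_j - t_i)$ equals $1$, and the parity prefactor is $(-1)^{(2n)(2n-1)/2} = (-1)^{n(2n-1)} = (-1)^n$, since $2n-1$ is odd. This reduces the left-hand side to
\[
\frac{\partial^{2n}}{\partial t_1 \cdots \partial t_{2n}} E[\sgn f(t_1) \cdots \sgn f(t_{2n})] = (-1)^n \ABS{\frac{2}{\pi}}^{n} (\det \Sigma(\bm{t}))^{1/2} \, E[f(t_1) \cdots f(t_{2n})],
\]
so that the whole problem becomes the identification of $(\det \Sigma(\bm{t}))^{1/2} E[f(t_1)\cdots f(t_{2n})]$ with the product of the Pfaffian and the remaining scalar factors in \eqref{eq:diffrential-pfaffian}.

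Next I would evaluate the two Gaussian quantities explicitly. The Wick formula \eqref{wick} gives $E[f(t_1)\cdots f(t_{2n})] = \hf(\sigma(t_i,t_j))_{i,j=1}^{2n} = \hf\ABS{(1-t_it_j)^{-1}}_{i,j=1}^{2n}$, while Cauchy's identity \eqref{eq:Cauchy} with $x_i = y_i = t_i$ gives $\det\Sigma(\bm{t}) = \prod_{1\le i<j\le 2n}(t_i-t_j)^2 / \prod_{i,j=1}^{2n}(1-t_it_j)$. Taking the positive square root (every factor $1-t_it_j$ is positive since $|t_it_j|<1$) and using $\prod_{i,j}(1-t_it_j) = \prod_i(1-t_i^2)\cdot\prod_{i<j}(1-t_it_j)^2$, I would write $(\det\Sigma(\bm{t}))^{1/2} = \prod_{i<j}|t_i-t_j| \big/ \bigl(\prod_i\sqrt{1-t_i^2}\cdot\prod_{i<j}(1-t_it_j)\bigr)$. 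Then I would feed the Hafnian into \eqref{eq:pfaffian-hafnian}, which expresses $\hf\ABS{(1-t_it_j)^{-1}}$ as $\pf\ABS{\frac{t_i-t_j}{(1-t_it_j)^2}}$ divided by $\prod_{i<j}\frac{t_i-t_j}{1-t_it_j}$. Multiplying the two expressions together, the factors $\prod_{i<j}(1-t_it_j)$ cancel, leaving $\prod_{i<j}\frac{|t_i-t_j|}{t_i-t_j} = \prod_{i<j}\sgn(t_i-t_j)$ times $\prod_i(1-t_i^2)^{-1/2}$ times the target Pfaffian.

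The last step is a purely combinatorial sign check, which I expect to be the only delicate point. The square root of $\det\Sigma$ introduces the \emph{unsigned} Vandermonde factor $\prod_{i<j}|t_i-t_j|$, whereas the Pfaffian--Hafnian identity supplies the \emph{signed} factor $\prod_{i<j}(t_i-t_j)$; reconciling them produces the sign product $\prod_{i<j}\sgn(t_i-t_j) = (-1)^{n(2n-1)}\prod_{i<j}\sgn(t_j-t_i) = (-1)^n\prod_{i<j}\sgn(t_j-t_i)$. One must verify that this $(-1)^n$ cancels exactly against the $(-1)^n$ from the parity prefactor of Lemma~\ref{lem:key-lem}, rather than reinforcing it; once that cancellation is confirmed, the right-hand side collapses precisely to \eqref{eq:diffrential-pfaffian}.
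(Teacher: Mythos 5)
Your proposal is correct and follows essentially the same route as the paper's own proof: specialize Lemma~\ref{lem:key-lem} to $m=0$, evaluate $E[f(t_1)\cdots f(t_{2n})]$ by the Wick formula \eqref{wick}, evaluate $(\det\Sigma(\bm{t}))^{1/2}$ by Cauchy's identity \eqref{eq:Cauchy}, and convert the Hafnian to a Pfaffian via \eqref{eq:pfaffian-hafnian}. The sign bookkeeping you flag does work out: the factor $\prod_{i<j}\sgn(t_i-t_j)=(-1)^n\prod_{i<j}\sgn(t_j-t_i)$ indeed cancels the $(-1)^n$ from the parity prefactor, exactly as in the paper.
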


\begin{proof}
Lemma \ref{lem:key-lem} with $m=0$ and with
the replacement $n$ by $2n$  gives
\[
\frac{\partial^{2n}}{\partial t_1  \cdots
\partial t_{2n}}
E[ \sgn f(t_1) \cdots \sgn f(t_{2n}) ] 
=  (-1)^{n} \ABS{\frac{2}{\pi}}^{n} 
(\det \Sigma(\bm{t}))^{1/2} E[f(t_1) \cdots f(t_{2n})].
\]
Here the Wick formula \eqref{wick} gives 
\[
E[f(t_1) \cdots f(t_{2n})] = \hf (E[f(t_i)f(t_j)])_{i,j=1}^{2n}=
\hf \ABS{(1-t_it_j)^{-1}}_{i,j=1}^{2n}
\]
and Cauchy's determinant identity \eqref{eq:Cauchy} gives
\[
(-1)^n (\det \Sigma(\bm{t}))^{1/2}
= \prod_{1 \le i<j \le 2n} \sgn(t_j-t_i) \cdot  \prod_{i=1}^{2n} 
\frac{1}{\sqrt{1-t_i^2}}
\cdot \prod_{1 \le i<j \le 2n} \frac{t_i-t_j}{1-t_it_j}.
\]
Hence we obtain
\begin{align*}
\lefteqn{\frac{\partial^{2n}}{\partial t_1 \partial t_2 \cdots
\partial t_{2n}}
E[ \sgn f(t_1) \sgn f(t_2) \cdots \sgn f(t_{2n}) ]}  
\notag \\
&= \ABS{\frac{2}{\pi}}^{n} 
\prod_{1 \le i<j \le 2n} \sgn (t_j-t_i) \cdot
\prod_{i=1}^{2n} \frac{1}{\sqrt{1-t_i^2}} \cdot 
\prod_{1 \le i<j \le 2n} \frac{t_i-t_j}{1-t_it_j} \cdot 
\hf \ABS{\frac{1}{1-t_it_j}}_{i,j=1}^{2n}.
\end{align*}
The desired Pfaffian expression follows from 
the Pfaffian-Hafnian identity
\eqref{eq:pfaffian-hafnian}.
\end{proof}

\section{Proof of Theorems \ref{thm:absolute} and \ref{thm:sgn}}
\label{sec:absolute-sgn}

\subsection{Some lemmas}

Proposition \ref{prop:differential} can be expressed as
\begin{align*}
\lefteqn{ \frac{\partial^{2n}}{\partial t_1 \partial t_2 \cdots
\partial t_{2n}}
E[ \sgn f(t_1) \sgn f(t_2) \cdots \sgn f(t_{2n}) ]}  \\
&=\ABS{\frac{2}{\pi}}^{n} 
\prod_{1 \le i<j \le 2n} \sgn (t_j-t_i) \cdot
\pf \ABS{ \mathbb{K}_{11}(t_i,t_j)}_{i,j=1}^{2n} \\
&= 
\ABS{\frac{2}{\pi}}^{n} 
\prod_{1 \le i<j \le 2n} \sgn (t_j-t_i) \cdot
\frac{\partial^{2n}}{\partial t_1 \cdots \partial t_{2n}} 
\pf \ABS{ \mathbb{K}_{22}(t_i,t_j)}_{i,j=1}^{2n}.
\end{align*}
If we remove the differential symbol $\frac{\partial^{2n}}{\partial t_1 \cdots \partial t_{2n}}$
in the above equation, then we get the equality in Theorem \ref{thm:sgn}.
The goal of the present subsection is to prove that this observation is veritably true.

For each subset $I$ of $\{1,2,\dots,2n\}$, 
we define the $2n \times 2n$ skew symmetric matrix $\mathbb{L}^I=\mathbb{L}^I(\bm{t})$, 
the $(i,j)$-entry of which is 
\begin{equation}
\mathbb{L}^I_{ij} = 
\begin{cases}
\mathbb{K}_{11}(t_i,t_j)=\frac{\partial^2}{\partial t_i \partial t_j} \mathbb{K}_{22}(t_i,t_j)  &
\text{if $i,j \in I$}, \\
\mathbb{K}_{12}(t_i,t_j)=\frac{\partial}{\partial t_i} 
\mathbb{K}_{22}(t_i,t_j)  &
\text{if $i \in I$ and $j \in I^c$}, \\
\mathbb{K}_{21}(t_i,t_j)=\frac{\partial}{\partial t_j} 
\mathbb{K}_{22}(t_i,t_j)  &
\text{if $i \in I^c$ and $j \in I$}, \\
\mathbb{K}_{22}(t_i,t_j) &
\text{if $i,j \in I^c$}.
\end{cases}
\label{eq:def-L}
\end{equation}
In particular, 
we put $\mathbb{L}^{[k]} = \mathbb{L}^I$ 
if $I=\{1,2,\dots,k\}$ and $\mathbb{L}^{[0]}= \mathbb{L}^\emptyset$.

\begin{lemma} \label{lem:Z-properties}
The following two claims hold true.
\begin{enumerate}
\item For each $k=0,1,\dots,2n-1$, $\frac{\partial}{\partial t_{k+1}} 
\pf \mathbb{L}^{[k]} = \pf \mathbb{L}^{[k+1]}$.
\item $\pf \mathbb{L}^{[k]} $ is skew symmetric 
 in $t_1,t_2, \dots,t_k$ and 
in $t_{k+1},t_{k+2},\dots,t_{2n}$, respectively.
\end{enumerate}
\end{lemma}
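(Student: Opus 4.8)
The plan is to exploit the ``derivative'' structure of the kernel recorded in \eqref{derivative}, which says that moving an index from $I^c$ into $I$ amounts to applying $\partial/\partial t$ in the corresponding variable. Concretely, comparing the definition \eqref{eq:def-L} for $I=\{1,\dots,k\}$ against $I=\{1,\dots,k+1\}$, I would first record the following structural fact. The matrices $\mathbb{L}^{[k]}$ and $\mathbb{L}^{[k+1]}$ agree in every entry $\mathbb{L}_{ij}$ with $i,j\neq k+1$; these common entries do not involve the variable $t_{k+1}$ at all; and the entries in row and column $k+1$ satisfy $\mathbb{L}^{[k+1]}_{k+1,j}=\frac{\partial}{\partial t_{k+1}}\mathbb{L}^{[k]}_{k+1,j}$ for every $j\neq k+1$, and symmetrically for the column. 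This last identity is exactly \eqref{derivative} read off case by case: for $j\in I$ differentiation in $t_{k+1}$ turns $\mathbb{K}_{21}$ into $\mathbb{K}_{11}$, and for $j\in I^c$ it turns $\mathbb{K}_{22}$ into $\mathbb{K}_{12}$.

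For claim (1) I would then expand the Pfaffian as a sum over perfect matchings of $\{1,\dots,2n\}$. Every matching contains exactly one pair $\{k+1,j\}$, and by the structural fact the associated monomial depends on $t_{k+1}$ only through the single factor $\mathbb{L}^{[k]}_{k+1,j}$, all remaining factors carrying indices $\neq k+1$ and being $t_{k+1}$-free. Hence $\partial/\partial t_{k+1}$ acts on each monomial, by the product rule, as differentiation of that one factor, replacing $\mathbb{L}^{[k]}_{k+1,j}$ by $\mathbb{L}^{[k+1]}_{k+1,j}$ while leaving the sign of the matching and all other factors untouched. Summing over matchings reassembles precisely $\pf\mathbb{L}^{[k+1]}$, giving $\frac{\partial}{\partial t_{k+1}}\pf\mathbb{L}^{[k]}=\pf\mathbb{L}^{[k+1]}$. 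Equivalently, one may phrase this as a Pfaffian cofactor expansion along index $k+1$, observing that the complementary Pfaffians are independent of $t_{k+1}$ and coincide for $\mathbb{L}^{[k]}$ and $\mathbb{L}^{[k+1]}$.

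For claim (2) I would argue that interchanging two variables lying in the same block realizes a conjugation of $\mathbb{L}^{[k]}$ by a permutation matrix. Fix $a,b$ with $a,b\in\{1,\dots,k\}$ (the block $\{k+1,\dots,2n\}$ being identical), let $\sigma=(a\,b)$, and let $P$ be the associated permutation matrix, so $\det P=-1$. Swapping the values $t_a\leftrightarrow t_b$ produces the matrix whose $(i,j)$-entry is the $\mathbb{L}^{[k]}$-entry evaluated at $(t_{\sigma(i)},t_{\sigma(j)})$. The point is that which of $\mathbb{K}_{11},\mathbb{K}_{12},\mathbb{K}_{21},\mathbb{K}_{22}$ occupies slot $(i,j)$ depends only on the membership of $i$ and $j$ in $I$ versus $I^c$; since $a,b\in I$ the transposition $\sigma$ preserves both $I$ and $I^c$, hence preserves all slot types, and the swapped matrix equals $P\,\mathbb{L}^{[k]}\,P^{\mathrm{t}}$ on the nose. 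Applying the identity $\pf(A B A^{\mathrm{t}})=(\det A)(\pf B)$ recalled earlier in the paper then shows that the swap negates $\pf\mathbb{L}^{[k]}$, which is skew-symmetry within each block.

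The main obstacle is really just the bookkeeping underlying the structural fact in (1): one must verify all four cases of \eqref{eq:def-L} against \eqref{derivative} and, crucially, check that deleting index $k+1$ leaves a submatrix whose entries are $t_{k+1}$-free and genuinely agree for $\mathbb{L}^{[k]}$ and $\mathbb{L}^{[k+1]}$. This is what makes the derivative land exactly on $\pf\mathbb{L}^{[k+1]}$ with no leftover terms. Once that is in place both claims are short, and claim (2) needs only the standard Pfaffian transformation law already quoted in the paper.
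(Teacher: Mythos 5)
Your proposal is correct and follows essentially the same route as the paper: claim (1) by expanding the Pfaffian over matchings, checking the three cases ($j=k+1$, $i=k+1$, neither) against \eqref{derivative}, and using that the entries away from index $k+1$ are $t_{k+1}$-free and common to $\mathbb{L}^{[k]}$ and $\mathbb{L}^{[k+1]}$; claim (2) by the permutation law for Pfaffians (your $\pf(PBP^{\mathrm{t}})=(\det P)\pf B$ is the same fact the paper invokes as $\pf(a_{\eta(i)\eta(j)})=\epsilon(\eta)\pf A$) together with the observation that a transposition within a block preserves the slot types. No gaps; your write-up merely spells out the bookkeeping the paper leaves implicit.
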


\begin{proof}
Recalling the definition of the Pfaffian, we have
\[
\frac{\partial}{\partial t_{k+1}} \pf \mathbb{L}^{[k]} 
= \sum_\eta \epsilon(\eta) \frac{\partial}{\partial t_{k+1}}
\prod_{i=1}^{n}
\mathbb{L}^{[k]}_{\eta(2i-1)\eta(2i)}.
\]
For each $i<j$, we see that:
\begin{align*}
 \text{if $j=k+1$, then} \qquad 
&
\frac{\partial}{\partial t_{k+1}} \mathbb{L}^{[k]}_{i,k+1}=\frac{\partial}{\partial t_{k+1}} \mathbb{K}_{12}(t_i,t_{k+1})
= \mathbb{K}_{11}(t_i,t_{k+1}) =\mathbb{L}^{[k+1]}_{i, k+1}; \\
\text{if $i=k+1$, then } \qquad
&\frac{\partial}{\partial t_{k+1}} \mathbb{L}^{[k]}_{k+1,j}=
\frac{\partial}{\partial t_{k+1}} \mathbb{K}_{22}(t_{k+1},t_{j})
= \mathbb{K}_{12}(t_{k+1},t_{j}) =\mathbb{L}^{[k+1]}_{k+1,j}; \\
\text{if $i,j \not=k+1$, then} \qquad
&\frac{\partial}{\partial t_{k+1}} \mathbb{L}^{[k]}_{i,j}=0.
\end{align*}
Hence we obtain 
\[
\frac{\partial}{\partial t_{k+1}} \pf \mathbb{L}^{[k]} 
= \sum_\eta \epsilon(\eta) 
\prod_{i=1}^{n}
\mathbb{L}^{[k+1]}_{\eta(2i-1)\eta(2i)}
= \pf \mathbb{L}^{[k+1]},
\]
which is the first claim.

Pfaffians are skew symmetric with respect to the change of 
the order of rows/columns, i.e., 
$\pf (a_{\eta(i) \eta(j)}) = \epsilon(\eta) \pf A$
for any $2n \times 2n$ skew symmetric matrix $A=(a_{ij})$ and 
a permutation $\eta$ on $\{1,2,\dots,2n\}$.
Hence the second claim follows from the definition of $\mathbb{L}^{[k]}$.
\end{proof}

Put
\[
\mf{X}_{2n}^< = \{(t_1,\dots,t_{2n}) \in \mf{X}_{2n} \ | \ t_1< \cdots <t_{2n} \}.
\]

\begin{lemma} \label{lem:sgn-limit}
\[
\lim_{\begin{subarray}{c}t_{2n} \to 1 \\ (t_1,\dots,t_{2n})\in \mf{X}_{2n}^< \end{subarray}
} E[\sgn f(t_1) \cdots \sgn f(t_{2n})]=0.
\]
\end{lemma}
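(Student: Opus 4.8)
The plan is to show that, as $t_{2n}\uparrow 1$ with $t_1,\dots,t_{2n-1}$ held fixed, the random sign $\sgn f(t_{2n})$ behaves like an unbiased coin toss that is asymptotically independent of $\sgn f(t_1),\dots,\sgn f(t_{2n-1})$; since an unbiased, independent sign contributes a mean-zero factor, the whole expectation collapses to $0$. The mechanism is that $\var f(t_{2n})=\sigma(t_{2n},t_{2n})=(1-t_{2n}^2)^{-1}\to\infty$ while each covariance $\sigma(t_i,t_{2n})=(1-t_it_{2n})^{-1}$ stays bounded, so $f(t_{2n})$ decorrelates from the remaining coordinates.

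To make this quantitative I would condition on $f(t_1),\dots,f(t_{2n-1})$. By the Gaussian conditioning formula the conditional law of $f(t_{2n})$ is normal with a mean $\hat M$ that is a linear functional of $f(t_1),\dots,f(t_{2n-1})$ and a deterministic variance $V$ (independent of the conditioning values). The key point is that Lemma \ref{lem:conditional} yields $V$ in closed product form: applying it with $n$ replaced by $2n-1$ gives $V=\bigl(\prod_{i=1}^{2n-1}\mu(t_{2n},t_i)\bigr)^2\sigma(t_{2n},t_{2n})$, whence by the law of total variance $\var\hat M=\sigma(t_{2n},t_{2n})-V=\sigma(t_{2n},t_{2n})\bigl(1-\prod_{i=1}^{2n-1}\mu(t_{2n},t_i)^2\bigr)$. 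Writing $\Phi$ for the standard normal distribution function, one has $E[\sgn f(t_{2n})\mid f(t_1),\dots,f(t_{2n-1})]=2\Phi(\hat M/\sqrt V)-1$, and the elementary estimate $|2\Phi(x)-1|\le\sqrt{2/\pi}\,|x|$ bounds this conditional sign by $\sqrt{2/\pi}\,|\hat M|/\sqrt V$.

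Then I would invoke the tower property together with $|\sgn f(t_1)\cdots\sgn f(t_{2n-1})|\le 1$ and Cauchy--Schwarz (using $E\hat M=0$) to get
\[
\bigl|E[\sgn f(t_1)\cdots\sgn f(t_{2n})]\bigr|
\le \sqrt{\tfrac{2}{\pi}}\,\frac{E|\hat M|}{\sqrt V}
\le \sqrt{\tfrac{2}{\pi}}\sqrt{\frac{\var\hat M}{V}}
= \sqrt{\tfrac{2}{\pi}}\sqrt{\frac{1-\prod_{i=1}^{2n-1}\mu(t_{2n},t_i)^2}{\prod_{i=1}^{2n-1}\mu(t_{2n},t_i)^2}}.
\]
Since $\mu(t_{2n},t_i)=\frac{t_{2n}-t_i}{1-t_{2n}t_i}\to\frac{1-t_i}{1-t_i}=1$ as $t_{2n}\to 1$ for each fixed $t_i\in(-1,1)$, the product tends to $1$ and the right-hand side tends to $0$, which is the claim.

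I expect the only real content to be the identification of the conditional variance: the clean product formula $V=\sigma(t_{2n},t_{2n})\prod_i\mu(t_{2n},t_i)^2$ supplied by Lemma \ref{lem:conditional} is exactly what forces the ratio $\var\hat M/V=(1-\prod\mu^2)/\prod\mu^2$ to vanish, and this feature is special to the kernel $\sigma$. The linearization $|2\Phi(x)-1|\le\sqrt{2/\pi}\,|x|$ and the probabilistic reduction are routine. One caveat I would record is that the limit is understood with $t_1,\dots,t_{2n-1}$ fixed (equivalently, staying in a compact subset of $(-1,1)$ and mutually separated): were $t_{2n-1}$ allowed to approach $t_{2n}$, the factor $\mu(t_{2n},t_{2n-1})^2$ would not tend to $1$ and those two signs would stay correlated. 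This is the natural reading of the limit within $\mf{X}_{2n}^<$ and is all that is needed as a boundary condition.
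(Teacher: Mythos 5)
Your argument is correct, but it takes a different route from the paper's. The paper's proof is a soft weak-convergence argument: it normalizes to $X(t)=\sqrt{1-t^2}\,f(t)$, observes that the correlation $c(t_i,t_{2n})\to\delta_{i,2n}$ as $t_{2n}\to 1$, and concludes that $X(t_{2n})$ converges in distribution to a standard Gaussian independent of $X(t_1),\dots,X(t_{2n-1})$, so the expectation of the product of signs tends to the product of a bounded factor with a mean-zero independent sign, i.e.\ to $0$. You instead condition on $f(t_1),\dots,f(t_{2n-1})$, identify the conditional variance $V=\sigma(t_{2n},t_{2n})\prod_{i<2n}\mu(t_{2n},t_i)^2$ via Lemma \ref{lem:conditional} (which the paper's proof of this lemma does not invoke), and derive the explicit bound $\sqrt{2/\pi}\,\bigl(\bigl(1-\prod\mu^2\bigr)/\prod\mu^2\bigr)^{1/2}$. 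The underlying mechanism — decorrelation of $f(t_{2n})$ from the other coordinates as $t_{2n}\to1$ — is the same in both proofs, but yours is quantitative (it yields a rate) at the cost of being longer, while the paper's is shorter but purely qualitative; note also that your route does not really need the special product structure of $\mu$, since only $\var\hat M/V\to 0$ is required, which for this kernel is equivalent to the paper's observation $c(t_i,t_{2n})\to 0$. All the individual steps check out: the conditional variance is deterministic and positive since $\Sigma(\bm t)$ is positive-definite on $\mf X_{2n}$, the estimate $|2\Phi(x)-1|\le\sqrt{2/\pi}\,|x|$ and the Cauchy--Schwarz step using $E\hat M=0$ are valid, and your reading of the limit (with $t_1,\dots,t_{2n-1}$ held fixed) is exactly how the lemma is consumed in the proof of Lemma \ref{lem:differential-pfaffian}, where the function in question has already been shown to be independent of $t_{2n}$.
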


\begin{proof}
If we put $X(t)= \sqrt{1-t^2} f(t)$, then
\[
E[\sgn f(t_1) \cdots \sgn f(t_{2n})] = E[\sgn X(t_1) \cdots \sgn X(t_{2n})]
\] 
and 
$E[X(t_i) X(t_j)]=c(t_i,t_j)$ with $c(s,t)$ in \eqref{eq:definition-c}. 
Furthermore, since $\lim_{t_{2n} \to 1} c(t_i,t_{2n})= \delta_{i,2n}$,
the random variable $X(t_{2n})$ converges in distribution 
to a standard Gaussian variable independent of 
other $X_i$ $(i <2n)$, which means that
$E[\sgn X(t_1) \cdots \sgn X(t_{2n})] \to 0$.
\end{proof}

\begin{lemma} \label{lem:Z-limit}
For each $k=0,1,2\dots,2n-1$,
\[
\lim_{\begin{subarray}{c}t_{2n} \to 1 \\ 
(t_1,\dots,t_{2n})\in \mf{X}_{2n}^< \end{subarray}
} 
 \pf \mathbb{L}^{[k]}=0.
\]
\end{lemma}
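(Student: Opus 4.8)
The plan is to exploit the fact that, since $k \le 2n-1$, the index $2n$ lies in $I^c$ for $I=\{1,\dots,k\}$, so that the entire $2n$-th row and column of $\mathbb{L}^{[k]}$ consist only of entries of type $\mathbb{K}_{12}$ and $\mathbb{K}_{22}$, never of type $\mathbb{K}_{11}$. Indeed, by \eqref{eq:def-L}, for $i<2n$ the $(i,2n)$-entry equals $\mathbb{K}_{12}(t_i,t_{2n})$ when $i\le k$ and $\mathbb{K}_{22}(t_i,t_{2n})$ when $k<i<2n$, while the diagonal entry vanishes. This is the structural observation that makes everything work.

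First I would show that each such entry tends to $0$ as $t_{2n}\to 1$ with $t_1<\cdots<t_{2n-1}$ held fixed, exactly as in the proof of Lemma~\ref{lem:sgn-limit}. From the explicit formula $\mathbb{K}_{12}(s,t)=\sqrt{(1-t^2)/(1-s^2)}\,(1-st)^{-1}$ one gets $\mathbb{K}_{12}(t_i,t_{2n})\to 0$, because the factor $\sqrt{1-t_{2n}^2}\to 0$ while $(1-t_i t_{2n})^{-1}\to(1-t_i)^{-1}$ stays bounded. Likewise $c(t_i,t_{2n})=\sqrt{(1-t_i^2)(1-t_{2n}^2)}\,(1-t_i t_{2n})^{-1}\to 0$ by \eqref{eq:definition-c}, so $\mathbb{K}_{22}(t_i,t_{2n})=\sgn(t_{2n}-t_i)\arcsin c(t_i,t_{2n})\to 0$ as well. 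Hence the whole $2n$-th row and column of $\mathbb{L}^{[k]}$ converge to $0$.

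The second step is to expand $\pf\mathbb{L}^{[k]}$ along this vanishing line. Using the Laplace-type expansion of a Pfaffian along a fixed row, $\pf\mathbb{L}^{[k]}$ is a signed linear combination $\sum_{j\ne 2n}\pm\,\mathbb{L}^{[k]}_{2n,j}\,\pf(\text{submatrix with rows and columns }2n,j\text{ deleted})$. Each cofactor is the Pfaffian of a matrix built from $t_1,\dots,t_{2n-1}$ only, so it does not depend on $t_{2n}$ and remains bounded as $t_{2n}\to 1$. Since every prefactor $\mathbb{L}^{[k]}_{2n,j}$ tends to $0$ by the first step, the whole sum tends to $0$, which is the assertion.

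I do not expect a genuine obstacle here; morally the statement is just that a skew-symmetric matrix with a vanishing row has a vanishing Pfaffian, together with the fact that the cofactors are constant in the moving variable $t_{2n}$. The only points needing care are bookkeeping ones: confirming via \eqref{eq:def-L} that no $\mathbb{K}_{11}$ entry (which does \emph{not} vanish as $t_{2n}\to 1$) ever appears in the $2n$-th row or column, which is guaranteed precisely because $k\le 2n-1$ forces $2n\in I^c$; and keeping $t_1,\dots,t_{2n-1}$ fixed throughout, so that the cofactors stay bounded and the convergence of the entries to $0$ is uniform enough to pass to the limit term by term.
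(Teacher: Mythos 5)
Your proof is correct and follows essentially the same route as the paper: the paper's argument is precisely that every entry in the last row and column of $\mathbb{L}^{[k]}$ tends to zero as $t_{2n}\to 1$ (which holds because $2n\in I^c$ forces those entries to be of type $\mathbb{K}_{12}$ or $\mathbb{K}_{22}$), hence so does the Pfaffian. Your additional bookkeeping about the Pfaffian expansion along the last row and the boundedness of the cofactors simply makes explicit what the paper leaves implicit.
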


\begin{proof}
Taking the limit $t_{2n} \to 1$,
each entry in the last row and column of $\mathbb{L}^{[k]}$ converges to zero, and 
thus so does $\pf \mathbb{L}^{[k]}$.
\end{proof}

\begin{lemma} \label{lem:differential-pfaffian}
Let $(t_1,\dots,t_{2n}) \in
\mf{X}_{2n}^<$.
For each $k=0,1,\dots,2n$,
\begin{equation} \label{eq:differential-pfaffian}
\frac{\partial^k}{\partial t_1 \cdots \partial t_k}
E[\sgn f(t_1) \cdots \sgn f(t_{2n})]
=\ABS{\frac{2}{\pi}}^n \pf \mathbb{L}^{[k]}.
\end{equation}
\end{lemma}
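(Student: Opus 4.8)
The plan is to prove \eqref{eq:differential-pfaffian} by induction on $k$, running from $k=2n$ down to $k=0$, using the chain of lemmas just established. The case $k=2n$ is precisely Proposition~\ref{prop:differential}: when $I=\{1,\dots,2n\}$ every index lies in $I$, so by \eqref{eq:def-L} every entry of $\mathbb{L}^{[2n]}$ equals $\mathbb{K}_{11}(t_i,t_j)=\frac{t_i-t_j}{(1-t_it_j)^2}\cdot\prod_i(1-t_i^2)^{-1/2}$-type factors, and matching this against the right-hand side of Proposition~\ref{prop:differential} identifies $(2/\pi)^n\pf\mathbb{L}^{[2n]}$ with the full $2n$-fold derivative on $\mf{X}_{2n}^<$, where the sign product $\prod_{i<j}\sgn(t_j-t_i)$ equals $1$. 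So the base case holds.

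For the inductive step I would descend from $k+1$ to $k$. Assume \eqref{eq:differential-pfaffian} holds for $k+1$, i.e.
\[
\frac{\partial^{k+1}}{\partial t_1 \cdots \partial t_{k+1}}
E[\sgn f(t_1)\cdots \sgn f(t_{2n})]
=\ABS{\frac{2}{\pi}}^n \pf \mathbb{L}^{[k+1]}.
\]
By part (1) of Lemma~\ref{lem:Z-properties}, $\pf\mathbb{L}^{[k+1]}=\frac{\partial}{\partial t_{k+1}}\pf\mathbb{L}^{[k]}$, so both sides of the displayed equation are the $t_{k+1}$-derivative of a candidate identity at level $k$. The strategy is therefore to integrate in $t_{k+1}$ from $1$ down to its actual value and show the two antiderivatives agree. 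Writing $G=\frac{\partial^{k}}{\partial t_1 \cdots \partial t_{k}}E[\sgn f(t_1)\cdots \sgn f(t_{2n})]$ and $H=\ABS{\frac{2}{\pi}}^n\pf\mathbb{L}^{[k]}$, the induction hypothesis gives $\frac{\partial G}{\partial t_{k+1}}=\frac{\partial H}{\partial t_{k+1}}$, so $G-H$ is constant in $t_{k+1}$; it then suffices to check that this constant is zero by evaluating the common limit as $t_{k+1}\to 1$.

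The boundary evaluation is where the separate limit lemmas enter, and choosing the right variable to send to $1$ is the delicate point. For $k\le 2n-1$ the index $t_{k+1}$ is not the largest in the ordering $t_1<\cdots<t_{2n}$, so I cannot directly push $t_{k+1}\to1$ within $\mf{X}_{2n}^<$. The clean fix is to exploit the skew-symmetry from part (2) of Lemma~\ref{lem:Z-properties}: both $G$ and $\pf\mathbb{L}^{[k]}$ are antisymmetric in $t_{k+1},\dots,t_{2n}$ (the derivative $G$ inherits this because $\frac{\partial^k}{\partial t_1\cdots\partial t_k}$ acts only on the first $k$ variables, which are disjoint from $\{k+1,\dots,2n\}$), so I may reduce to the limit where the \emph{last} coordinate $t_{2n}\to1$. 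Then Lemma~\ref{lem:sgn-limit} forces $G\to0$ (differentiation in the fixed variables $t_1,\dots,t_k$ commutes with the limit, and the vanishing is uniform enough to survive it), while Lemma~\ref{lem:Z-limit} gives $\pf\mathbb{L}^{[k]}\to0$, hence $H\to0$. Therefore $G-H\equiv0$, completing the induction.

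The step I expect to be the main obstacle is justifying that $G\to0$ as $t_{2n}\to1$, because $G$ involves a $k$-fold $t$-derivative and one must argue that $\lim$ and $\partial^k/\partial t_1\cdots\partial t_k$ can be interchanged; Lemma~\ref{lem:sgn-limit} only asserts vanishing of the undifferentiated expectation. A careful argument would note that $\pf\mathbb{L}^{[k]}$ is jointly smooth in all variables on $\mf{X}_{2n}^<$ and that its $t_{2n}\to1$ limit is zero for \emph{every} admissible $(t_1,\dots,t_{k})$, so the induction hypothesis at level $k+1$ already controls $\frac{\partial G}{\partial t_{k+1}}$ uniformly; combined with the explicit smoothness of $\pf\mathbb{L}^{[k]}$ this pins the integration constant down. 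Once that interchange is secured, the rest is the bookkeeping of tracking which entries of $\mathbb{L}^{[k]}$ sit in blocks $I,I^c$ and verifying the derivative relations in \eqref{eq:def-L}, which are exactly the entries recorded in \eqref{derivative}.
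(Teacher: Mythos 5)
Your proof is essentially the paper's own argument: a downward induction on $k$ with base case Proposition~\ref{prop:differential}, using Lemma~\ref{lem:Z-properties}(1) to show the difference of the two sides is constant in $t_{k+1}$, the symmetry statement of Lemma~\ref{lem:Z-properties}(2) to transfer that constancy to $t_{2n}$, and Lemmas~\ref{lem:sgn-limit} and~\ref{lem:Z-limit} to evaluate the constant as zero in the limit $t_{2n}\to 1$. The only slip is that $G$ is \emph{symmetric} (not antisymmetric) in $t_{k+1},\dots,t_{2n}$, which is why the paper carries the factor $\prod_{1\le i<j\le 2n}\sgn(t_j-t_i)$ alongside $\pf \mathbb{L}^{[k]}$ in its auxiliary function $Z^{[k]}$; this does not affect the logic, and the limit--derivative interchange you flag as the main obstacle is glossed over in the paper as well.
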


\begin{proof}
Consider the function $Z^{[k]}$ on $\mf{X}_{2n}$
defined by 
\begin{align*}
\lefteqn{Z^{[k]}(t_1,\dots,t_{2n})} \\
&= \frac{\partial^k}{\partial t_1 \cdots \partial t_k}
E[\sgn f(t_1) \cdots \sgn f(t_{2n})] 
-\ABS{\frac{2}{\pi}}^n  \prod_{1 \le i<j \le 2n} 
\sgn(t_j-t_i) \cdot \pf \mathbb{L}^{[k]}.
\end{align*}
Since $ \mathbb{L}^{[2n]}=\ABS{ \mathbb{K}_{11}(t_i,t_j) }_{i,j=1}^{2n}$,
Proposition \ref{prop:differential} implies that
$Z^{[2n]} \equiv 0$ on $\mf{X}_{2n}$.
Let $k \in \{0,1,\dots,2n-1\}$ and 
suppose that $Z^{[k+1]} \equiv 0$ on $\mf{X}_{2n}^<$.
Our goal is to prove $Z^{[k]} \equiv 0$ on $\mf{X}_{2n}^<$.

From the first statement of Lemma \ref{lem:Z-properties},
$\frac{\partial}{\partial t_{k+1}} Z^{[k]}(t_1,\dots,t_{2n})
= Z^{[k+1]}(t_1,\dots,t_{2n})$,
and hence our assumption implies
$\frac{\partial}{\partial t_{k+1}} Z^{[k]}(t_1,\dots,t_{2n})
=0$.
Therefore  $Z^{[k]}$
is independent of $t_{k+1}$.
From the second statement of Lemma \ref{lem:Z-properties},
$Z^{[k]}$ is symmetric in $t_{k+1},\dots,t_{2 n}$, 
and therefore $Z^{[k]}$ is also  independent of $t_{2n}$.
However, 
\[
\lim_{\begin{subarray}{c}t_{2n} \to 1 \\ (t_1,\dots,t_{2n})\in \mf{X}_{2n}^< \end{subarray}
}  Z^{[k]} =0
\]
by Lemmas \ref{lem:sgn-limit} and \ref{lem:Z-limit}.
Hence, $Z^{[k]}$   must be identically zero on $\mf{X}_{2n}^<$.
\end{proof}

\subsection{Proof of Theorem \ref{thm:sgn}}

\begin{proof}[Proof of Theorem \ref{thm:sgn}]
Lemma \ref{lem:differential-pfaffian} for $k=0$
implies 
\[
E[\sgn f(t_1) \cdots \sgn f(t_{2n})]
=\ABS{\frac{2}{\pi}}^n 
\pf (\mathbb{K}_{22}(t_i,t_j))_{i,j=1}^{2n}
\]
for $t_1< \cdots <t_{2n}$.
Since $\pf (\mathbb{K}_{22}(t_i,t_j))_{i,j=1}^{2n}$
is skew symmetric in $t_1,\dots,t_{2n}$, 
\[
\prod_{1 \le i< j \le 2n} 
\sgn(t_j-t_i) \cdot
\pf (\mathbb{K}_{22}(t_i,t_j))_{i,j=1}^{2n}
\]
is symmetric and coincides with 
$E[\sgn f(t_1) \cdots \sgn f(t_{2n})]$ on $\mf{X}_{2n}$.
Thus we have obtained Theorem \ref{thm:sgn}. 
\end{proof}

The following corollary is a consequence of Theorem \ref{thm:sgn}.

\begin{corollary} \label{cor:differential-pfaffian}
For $(t_1,\dots,t_{2n}) \in \mf{X}_{2n}$ and 
a subset $I=\{i_1<i_2< \cdots<i_k\}$ in $\{1,2,\dots,2n\}$,
\[
\frac{\partial^k}{\partial t_{i_1} \cdots \partial t_{i_k}}
E[\sgn f(t_1) \cdots \sgn f(t_{2n})]=
\ABS{\frac{2}{\pi}}^n \prod_{1 \le i<j \le 2n} \sgn(t_j-t_i) \cdot 
\pf \mathbb{L}^I,
\]
where $\mathbb{L}^I$ is defined in \eqref{eq:def-L}.
\end{corollary}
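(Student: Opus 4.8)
The plan is to start from Theorem~\ref{thm:sgn}, which holds on all of $\mf{X}_{2n}$ and reads
\[
E[\sgn f(t_1)\cdots \sgn f(t_{2n})] = \ABS{\frac{2}{\pi}}^n \prod_{1\le i<j\le 2n}\sgn(t_j-t_i)\cdot \pf(\mathbb{K}_{22}(t_i,t_j))_{i,j=1}^{2n},
\]
and simply to differentiate this identity in the variables $t_{i_1},\dots,t_{i_k}$. First I would fix a connected component (chamber) of $\mf{X}_{2n}$, i.e.\ fix the relative order of the $t_m$'s. On such a chamber the prefactor $\prod_{1\le i<j\le 2n}\sgn(t_j-t_i)$ is constant, so it passes outside the differential operator, and each entry $\mathbb{K}_{22}(t_i,t_j)=\sgn(t_j-t_i)\arcsin c(t_i,t_j)$ is smooth there; hence the right-hand side is a smooth function whose mixed partials exist and commute. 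It then remains to prove the purely combinatorial identity
\[
\frac{\partial^k}{\partial t_{i_1}\cdots \partial t_{i_k}}\pf(\mathbb{K}_{22}(t_i,t_j))_{i,j=1}^{2n}=\pf \mathbb{L}^I.
\]

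For this I would expand the Pfaffian as $\sum_\eta \epsilon(\eta)\prod_{\ell=1}^n \mathbb{K}_{22}(t_{\eta(2\ell-1)},t_{\eta(2\ell)})$ over matchings $\eta$. The structural observation is that each matching partitions $\{1,\dots,2n\}$ into pairs, so every variable $t_m$ occurs in exactly one factor of each product. Since $i_1,\dots,i_k$ are distinct, the product rule applies cleanly: $\frac{\partial}{\partial t_{i_s}}$ acts only on the single factor containing $t_{i_s}$, and two indices of $I$ land on the same factor precisely when they form a matched pair. A short case analysis then shows that for each pair $(a,b)=(\eta(2\ell-1),\eta(2\ell))$ the differentiated factor equals $\mathbb{L}^I_{ab}$ in every one of the four cases of \eqref{eq:def-L}: no derivative when $a,b\in I^c$ (giving $\mathbb{K}_{22}$), a single derivative $\frac{\partial}{\partial t_a}$ or $\frac{\partial}{\partial t_b}$ when exactly one of $a,b$ lies in $I$ (giving $\mathbb{K}_{12}$ or $\mathbb{K}_{21}$), and the mixed second derivative when both lie in $I$ (giving $\mathbb{K}_{11}$), using precisely the derivative relations \eqref{derivative}. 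Summing over $\eta$ reassembles $\pf \mathbb{L}^I$. This is the natural generalization of the first assertion of Lemma~\ref{lem:Z-properties}, passing from the special set $I=[k]$ (differentiated one step at a time) to an arbitrary subset $I$ differentiated all at once.

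Combining the two ingredients gives, on each chamber,
\[
\frac{\partial^k}{\partial t_{i_1}\cdots \partial t_{i_k}} E[\sgn f(t_1)\cdots \sgn f(t_{2n})] = \ABS{\frac{2}{\pi}}^n \prod_{1\le i<j\le 2n}\sgn(t_j-t_i)\cdot \pf \mathbb{L}^I,
\]
and since $\mf{X}_{2n}$ is the disjoint union of its chambers the identity extends to all of $\mf{X}_{2n}$, which is the claim. I do not anticipate a serious analytic obstacle; the points demanding care are essentially bookkeeping. The differentiation must be carried out inside a single chamber, so that the discontinuous factor $\sgn(t_j-t_i)$ is treated as a constant rather than producing spurious Dirac contributions, and one should confirm that the relations \eqref{derivative} hold on both sides of the diagonal. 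They do: the $\sgn(t-s)$ sitting inside $\mathbb{K}_{22}$ cancels the $\sgn(t-s)$ generated by differentiating $\arcsin c(s,t)$, leaving $\frac{\partial}{\partial s}\mathbb{K}_{22}=\mathbb{K}_{12}$ (and likewise for the other entries) valid chamber-independently, which is exactly what makes the case analysis above uniform across all orderings.
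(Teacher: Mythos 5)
Your proposal is correct and follows the same route as the paper: the paper likewise derives the corollary from Theorem~\ref{thm:sgn} by observing that $\frac{\partial^k}{\partial t_{i_1}\cdots\partial t_{i_k}}\pf\mathbb{L}^{\emptyset}=\pf\mathbb{L}^{I}$, with the constant sign prefactor on each chamber. Your write-up simply spells out the matching-by-matching case analysis and the chamber-independence of the derivative relations \eqref{derivative}, which the paper leaves implicit.
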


\begin{proof}
Observe that $\frac{\partial^k}{\partial t_{i_1} \cdots \partial t_{i_k}}
\pf \mathbb{L}^\emptyset = \pf \mathbb{L}^I$.
\end{proof}

Note that 
if $t_r=t_s$ for some $r \not=s$ then 
the both sides in the equation of the corollary vanish.

\subsection{Proof of Theorem \ref{thm:absolute}}

\begin{lemma} \label{lem:absolute-diffenretial-pfaffian}
For $(t_1,\dots,t_n) \in \mf{X}_n$,
\begin{align*}
&\lim_{s_1 \to t_1+0} \cdots \lim_{s_n \to t_n+0}
\frac{\partial^n}{\partial t_1 \cdots \partial t_n}
E[\sgn f(t_1) \cdots \sgn f(t_n) \sgn f(s_1) \cdots \sgn f(s_n)] \\
&= \ABS{\frac{2}{\pi}}^n \pf (\mathbb{K}(t_i,t_j))_{i,j=1}^n.
\end{align*}
\end{lemma}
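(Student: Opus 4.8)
The plan is to recognise the left-hand side as a boundary specialisation of Corollary~\ref{cor:differential-pfaffian} applied to the $2n$ points $t_1,\dots,t_n,s_1,\dots,s_n$, and then to pass to the limit $s_i\to t_i+0$ inside the resulting Pfaffian. First I would reduce to the ordered situation. Both sides are symmetric under a simultaneous permutation of the indices $i=1,\dots,n$: the Pfaffian $\pf(\mathbb{K}(t_i,t_j))_{i,j=1}^n$ is unchanged because permuting the index $i$ permutes pairs of block rows together with the matching pairs of block columns (an even permutation of rows and of columns), while on the left the expectation is symmetric in all its arguments, the operator $\frac{\partial^n}{\partial t_1\cdots\partial t_n}$ is symmetric in the $t_i$, and the iterated limit may be reordered once joint continuity is established. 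Hence I may assume $t_1<t_2<\cdots<t_n$. For $s_i$ sufficiently close to $t_i$ from above I can keep $s_i\in(t_i,t_{i+1})$ (and $s_n\in(t_n,1)$), so that the $2n$ points interlace as $t_1<s_1<t_2<s_2<\cdots<t_n<s_n$; relabelling $u_{2i-1}=t_i$ and $u_{2i}=s_i$ puts $(u_1,\dots,u_{2n})\in\mf{X}_{2n}^<$.

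Next I would apply Corollary~\ref{cor:differential-pfaffian} to the ordered tuple $(u_1,\dots,u_{2n})$ with the subset $I=\{1,3,\dots,2n-1\}$ consisting of the positions of the $t_i$. Since the $u$'s are increasing, the sign product $\prod_{i<j}\sgn(u_j-u_i)$ equals $1$, and differentiating with respect to $u_{2i-1}$ is exactly differentiating with respect to $t_i$ with the $s_j$ held fixed, so the corollary yields
\[
\frac{\partial^n}{\partial t_1\cdots\partial t_n}E[\sgn f(t_1)\cdots\sgn f(t_n)\sgn f(s_1)\cdots\sgn f(s_n)]=\ABS{\frac{2}{\pi}}^n\pf\mathbb{L}^I
\]
on this ordered region.

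Finally I would let $s_i\to t_i+0$ in $\pf\mathbb{L}^I$. By the definition \eqref{eq:def-L}, the $2\times2$ block of $\mathbb{L}^I$ in rows $\{2i-1,2i\}$ and columns $\{2j-1,2j\}$ has entries $\mathbb{K}_{11}(t_i,t_j)$, $\mathbb{K}_{12}(t_i,s_j)$, $\mathbb{K}_{21}(s_i,t_j)$, and $\mathbb{K}_{22}(s_i,s_j)$, which converge entrywise to the four entries of $\mathbb{K}(t_i,t_j)$; hence $\mathbb{L}^I\to(\mathbb{K}(t_i,t_j))_{i,j=1}^n$ and $\pf\mathbb{L}^I\to\pf(\mathbb{K}(t_i,t_j))_{i,j=1}^n$. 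The only point needing care is the $\mathbb{K}_{22}$ slot, whose defining factor $\sgn(t-s)$ jumps across the diagonal: but in $\mathbb{L}^I$ the near-diagonal pair $(t_i,s_i)$ sits in the smooth $\mathbb{K}_{12}$ slot (since $2i-1\in I$ while $2i\in I^c$), the $\mathbb{K}_{22}$ entries occur only for pairs $(s_i,s_j)$ with $i\ne j$, whose limits $t_i\ne t_j$ stay off the diagonal, and for $i=j$ where $\mathbb{K}_{22}(s_i,s_i)\equiv 0$. Thus every entry is continuous along the chosen path, which also legitimises the interchange of iterated limits invoked in the reduction step.

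The main obstacle is precisely this last verification: arranging the interlacing and the choice of $I$ so that the singular pair $(t_i,s_i)$ lands in the smooth $\mathbb{K}_{12}$ entry and every $\mathbb{K}_{22}$ evaluation is kept away from its sign discontinuity. This is exactly what the approach ``from above'' ($s_i\to t_i+0$) and the odd-position subset $I$ accomplish, and it is what makes the passage to the limit clean; everything else is a direct specialisation of Corollary~\ref{cor:differential-pfaffian}.
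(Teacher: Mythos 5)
Your proposal is correct and follows essentially the same route as the paper: order the points, interlace as $t_1<s_1<\cdots<t_n<s_n$, apply Corollary~\ref{cor:differential-pfaffian} with $I=\{1,3,\dots,2n-1\}$ so the sign product is $1$, pass to the limit entrywise in $\pf\mathbb{L}^I$, and extend by symmetry. Your extra care about why the $\sgn(t-s)$ discontinuity in $\mathbb{K}_{22}$ causes no trouble (the near-diagonal pairs land in the $\mathbb{K}_{12}$ slot and the diagonal $\mathbb{K}_{22}$ entries vanish identically) is a point the paper leaves implicit.
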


\begin{proof}
Take $-1<t_1< s_1 < t_2<s_2< \cdots <t_n<s_n<1$.
Corollary \ref{cor:differential-pfaffian} with 
$I=\{1,3,5,\dots,2n-1\}$ and with the replacement
$(t_1,\dots,t_{2n})$ by $(t_1,s_1,\dots,t_n,s_n)$ gives 
\[
\frac{\partial^n}{\partial t_1 \cdots \partial t_n}
E[\sgn f(t_1) \cdots \sgn f(t_n) \sgn f(s_1) \cdots \sgn f(s_n)] 
= \ABS{\frac{2}{\pi}}^n  
\pf (\mathfrak{K}_{ij})_{i,j=1}^n,
\]
where
\[
\mathfrak{K}_{ij}= 
\begin{pmatrix} 
\mathbb{K}_{11}(t_{i},t_j) & \mathbb{K}_{12}(t_i,s_j) \\
\mathbb{K}_{21}(s_i,t_j) & \mathbb{K}_{22}(s_i,s_j)
\end{pmatrix}.
\]
Taking the limit $s_1 \to t_1, \dots, s_n \to t_n$,
it converges to
$ \ABS{\frac{2}{\pi}}^n \pf (\mathbb{K}(t_i,t_j))_{i,j=1}^n$
for $t_1< \cdots <t_{n}$.
From the symmetry for $t_1,\dots,t_n$, 
the achieved result holds true for every $(t_1,\dots,t_n) \in \mf{X}_n$.
\end{proof}

\begin{proof}[Proof of Theorem \ref{thm:absolute}]

We use  Lemma \ref{lem:key-lem} with 
$m=n$.
The identity in the lemma holds true
for $-1<t_1<s_1 <t_2<s_2< \cdots<t_n <s_n<1$.
Note that $\prod_{i=1}^n \prod_{j=1}^n \sgn(s_j-t_i)=(-1)^{n(n-1)/2}$.
Taking the limit 
$s_1 \to t_1, \dots, s_n \to t_n$,
\begin{align*}
&\lim_{s_1 \to t_1+0} \cdots \lim_{s_n \to t_n+0}
\frac{\partial^n}{\partial t_1 \cdots \partial t_n}
E[\sgn f(t_1) \cdots \sgn f(t_n) \sgn f(s_1) \cdots \sgn f(s_n)] \\
&=  \ABS{\frac{2}{\pi}}^{n/2}
(\det \Sigma(\bm{t}))^{1/2} E[ |f(t_1) \cdots f(t_n)|]
\end{align*}
for $-1 <t_1< \cdots <t_n<1$.
From the symmetry for $t_1,\dots,t_n$,
the above equation holds true for every $(t_1,\dots,t_n) \in \mf{X}_n$.
Combining this fact with Lemma \ref{lem:absolute-diffenretial-pfaffian},
we obtain Theorem \ref{thm:absolute}.
\end{proof}

\section{Proofs of Theorem \ref{thm:correlation}, 
Corollary~\ref{2point} and Corollary~\ref{cor:variance}}
\label{sec:correlation}

\subsection{Proof of Theorem \ref{thm:correlation}}

Hammersley's formula \cite{Ham} describes correlation functions of zeros 
of random polynomials, which was observed by Hammersley 
and it is extended to Gaussian analytic functions 
as Corollary 3.4.2 in \cite{HKPV}. 
The following lemma is a real 
version of Hammersley's formula for 
correlation functions of Gaussian analytic functions. 

\begin{lemma} \label{lem:Hammersley}
Let $X(t)$ be a random power series with independent real Gaussian coefficients 
defined on an interval $(-1, 1)$ with covariance kernel $K$. 
If $\det K(\bm{t}) = \det (K(t_i,t_j))_{i,j=1}^n$ does not vanish anywhere on $\mf{X}_n$, 
then the $n$-point correlation function for real zeros of $f$
exists and is given by 
\[
\rho_n(t_1,\dots,t_n) = \frac{E[|X'(t_1) \cdots X'(t_n)| \ | \ 
X(t_1)= \cdots =X(t_n)=0]}{(2\pi)^{n/2} \sqrt{\det K(\bm{t})}}
\]
for $\bm{t}=(t_1,\dots,t_n) \in \mf{X}_n$. 
\end{lemma}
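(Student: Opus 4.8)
The plan is to prove this as a Kac--Rice formula, in the same delta-function spirit as the proof of Lemma~\ref{lem:GaussianProcess}. First I would recall that the $n$-point correlation function is the density of the $n$-th factorial moment measure of the zero set: for nonnegative measurable $\phi$ on $(-1,1)^n$,
\[
E\Big[\sum \phi(z_1,\dots,z_n)\Big]
= \int_{(-1,1)^n} \phi(t_1,\dots,t_n)\,\rho_n(t_1,\dots,t_n)\, dt_1\cdots dt_n,
\]
the sum being over ordered tuples of \emph{distinct} zeros $z_1,\dots,z_n$ of $X$. Because $X$ is a convergent power series it is real-analytic on $(-1,1)$, so almost surely $X\not\equiv 0$ and its zeros are isolated and simple; hence the zero set is a well-defined locally finite simple point process, and it suffices to produce the integral representation of its factorial moment measure.

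The key step is the coarea (change-of-variables) representation of the zero-counting measure. Near a simple zero $t_*$ one has $X(t)\approx X'(t_*)(t-t_*)$, so the contribution of that zero to $|X'(t)|\,\delta_0(X(t))$ is exactly $\delta_{t_*}$; summing over zeros gives the identity of measures $|X'(t)|\,\delta_0(X(t))\,dt = \sum_{t_*}\delta_{t_*}$. Taking products over the $n$ coordinates and expectations, and using that distinct zeros are automatically selected because the diagonal carries no mass, I obtain
\[
\rho_n(t_1,\dots,t_n)
= E\Big[\,\prod_{i=1}^n |X'(t_i)|\,\delta_0(X(t_i))\,\Big].
\]
This is the delicate step; I would justify it either by the coarea formula applied coordinate-by-coordinate, or, following the authors' treatment of Lemma~\ref{lem:GaussianProcess}, by reading the $\delta_0(X(t_i))$ as Watanabe generalized Wiener functionals in Malliavin calculus and invoking \cite{W,W-AP}.

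Finally I would disintegrate the Gaussian expectation against the joint law of $(X(t_1),\dots,X(t_n))$. Since $\det K(\bm{t})\neq 0$ on $\mf{X}_n$, this vector has a nondegenerate density, so conditioning on $X(t_1)=\cdots=X(t_n)=0$ is legitimate and
\[
E\Big[\,\prod_{i=1}^n |X'(t_i)|\,\delta_0(X(t_i))\,\Big]
= E\Big[\,\prod_{i=1}^n |X'(t_i)| \ \Big|\ X(t_1)=\cdots=X(t_n)=0\,\Big]\cdot p_{\bm{t}}(\bm{0}),
\]
where $p_{\bm{t}}(\bm{0})$ denotes the density of $(X(t_1),\dots,X(t_n))$ at the origin. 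Evaluating this centered Gaussian density gives $p_{\bm{t}}(\bm{0})=(2\pi)^{-n/2}(\det K(\bm{t}))^{-1/2}$, and substituting produces exactly the claimed formula. The main obstacle is the rigorous justification of the multivariate delta-function step; I would emphasize that the hypothesis $\det K(\bm{t})\neq 0$ is precisely what makes both the conditioning and the density factor well-posed, while the real-analyticity of $X$ is what excludes accumulation of zeros and multiple zeros that would otherwise break the Kac--Rice representation.
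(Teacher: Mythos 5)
Your argument is correct and is essentially the route the paper takes: the paper's own proof simply defers to Corollary 3.4.2 of \cite{HKPV} (Hammersley's formula for Gaussian analytic functions), noting only that in the real case the Jacobian of $\bm{t}\mapsto(X(t_1),\dots,X(t_n))$ contributes $|X'(t_1)\cdots X'(t_n)|$ to the first power rather than squared, and the proof in that reference is precisely the Kac--Rice/coarea computation you sketch. Your delta-function step is at the same level of rigor as the paper's own heuristic proof of Lemma~\ref{lem:GaussianProcess}, which likewise appeals to Watanabe's generalized Wiener functionals for its justification.
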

\begin{proof}
This can be proved in almost the same way as in the proof of (3.4.1) in 
Corollary 3.4.2 in \cite{HKPV}. 
The only difference is that the exponent of $|X'(t_1) \cdots X'(t_n)|$
 is $1$ in the case of real Gaussian coefficients 
instead of $2$ in the complex case. 
This is due to the fact that 
the Jacobian determinant of $F(\bm{t}) = (X(t_1), \dots, X(t_n))$ 
is equal to $|X'(t_1) \cdots X'(t_n)|$ when $X$ is a real-valued 
differentiable function 
while $|X'(t_1) \cdots X'(t_n)|^2$ when $X$ is complex-valued.  
\end{proof}

\begin{proof}[Proof of Theorem \ref{thm:correlation}]
From Lemma \ref{lem:conditional} and \eqref{eq:product-of-q} we have
\begin{align*}
E[|f'(t_1) \cdots f'(t_n)| \ | \ f(t_1)= \cdots =f(t_n)=0]
&=  |q_1(\bm{t}) \cdots q_n(\bm{t})| E[|f(t_1) \cdots f(t_n)|] \\
&=  \det \Sigma(\bm{t}) \cdot E[|f(t_1) \cdots f(t_n)|],
\end{align*}
and it follows from Lemma \ref{lem:Hammersley} that
\[
\rho_n(t_1,\dots,t_n)= (2\pi)^{-n/2} (\det \Sigma(\bm{t}))^{1/2} E[|f(t_1) \cdots f(t_n)|].
\]
Hence Theorem \ref{thm:correlation} follows from Theorem \ref{thm:absolute}.
\end{proof}

\subsection{Proof of Corollaries~\ref{2point} and \ref{cor:variance}}

\begin{proof}[Proof of Corollary~\ref{2point}] 
From (\ref{2correlation}) we observe that 
\[
R(s,t) =  1 + |\mu(s,t)| c(s,t) \arcsin c(s,t) - c(s,t)^2,  
\]
and so $R(s,s)=0$ and $R(s,\pm 1) =1$. 
A simple calculation yields 
\begin{align*}
 \frac{\partial}{\partial t} |\mu(s,t)| 
&= \frac{\sgn(t-s)}{1-t^2} c(s,t)^2, \qquad 
 \frac{\partial}{\partial t} c(s,t)
= - \frac{\sgn(t-s)}{1-t^2} |\mu(s,t)| c(s,t), \\
 \frac{\partial}{\partial t} \arcsin c(s,t) 
&= - \frac{\sgn(t-s)}{1-t^2} c(s,t)
\end{align*}
and hence we obtain 
\[
 \frac{\partial}{\partial t} R(s,t)
= \frac{\sgn(t-s)}{1-t^2}
c(s,t) g(s,t), 
\] 
where 
\[
g(s,t) := \{c(s,t)^2 \arcsin c(s,t) - |\mu(s,t)|^2 \arcsin c(s,t) 
+ c(s,t)|\mu(s,t)|\}. 
\]
Since $g(s,\pm 1)=0$ and 
\begin{align*}
\frac{\partial}{\partial t} 
g(s,t) = \frac{-4 \sgn(t-s)}{1-t^2} |\mu(s,t)| c(s,t)^2 \arcsin c(s,t),  
\end{align*}
we have $g(s,t) \ge 0$. This implies the claim.  
\end{proof}

\begin{proof}[Proof of Corollary~\ref{cor:variance}]
The first equality immediately follows from $E N_r = \int_{-r}^r
 \rho_1(s) ds$. Recall that 
\[
 \var N_r 
= \int_{-r}^r \int_{-r}^r \rho_2(s,t) ds dt 
+ \int_{-r}^r \rho_1(s) ds - \left(\int_{-r}^r \rho_1(s) ds\right)^2. 
\]
We recall the $2$-correlation function 
 \[
 \rho_2(s,t) = \pi^{-2}\{\mathbb{K}_{12}(s,s) \mathbb{K}_{12}(t,t)
- \mathbb{K}_{11}(s,t) \mathbb{K}_{22}(s,t)
+ \mathbb{K}_{12}(s,t) \mathbb{K}_{21}(s,t)\}
\]
from (\ref{2correlation}). 
Taking the discontinuity of $\mathbb{K}_{22}(s,t)$ at $s=t$ into account
and using integration by parts together with (\ref{derivative}), 
we have 
\begin{align*}
& \int_{-r}^r \mathbb{K}_{11}(s,t) \mathbb{K}_{22}(s,t) dt \\
&= \int_{-r}^s \frac{\partial^2 \mathbb{K}_{22}}{\partial s \partial t} 
(s,t) \mathbb{K}_{22}(s,t) dt
+ \int_s^r \frac{\partial^2 \mathbb{K}_{22}}{\partial s \partial t}(s,t) 
\mathbb{K}_{22}(s,t) dt \\
&= 
[\frac{\partial \mathbb{K}_{22}}{\partial s}
(s,t) \mathbb{K}_{22}(s,t)]_{t=-r}^{s-0}  
+ [\frac{\partial \mathbb{K}_{22}}{\partial s}(s,t)
 \mathbb{K}_{22}(s,t)]_{t=s+0}^r 
- \int_{-r}^r \mathbb{K}_{12}(s,t) \mathbb{K}_{21}(s,t) dt \\
&= \{ 
\mathbb{K}_{12}(s,r) \mathbb{K}_{22}(s,r) - 
\mathbb{K}_{12}(s,-r) \mathbb{K}_{22}(s,-r) 
- \pi \mathbb{K}_{12}(s,s)\}
- \int_{-r}^r \mathbb{K}_{12}(s,t) \mathbb{K}_{21}(s,t) dt. 
\end{align*}
It is easy to see that 
\begin{align*}
\lefteqn{\int_{-r}^r 
ds \{\mathbb{K}_{12}(s,r) \mathbb{K}_{22}(s,r) - 
\mathbb{K}_{12}(s,-r) \mathbb{K}_{22}(s,-r) \}} \\
&= \frac{1}{2} [\mathbb{K}_{22}(s,r)^2 - \mathbb{K}_{22}(s,-r)^2]_{-r}^r 
= \mathbb{K}_{22}(r,r)^2 - \mathbb{K}_{22}(r,-r)^2 = O(1). 
\end{align*}
Hence, we obtain 
\begin{align*}
 \var N_r 
=& 2 \pi^{-2} \left(\pi
\int_{-r}^r \mathbb{K}_{12}(s,s) ds +  
\int_{-r}^r \int_{-r}^r \mathbb{K}_{12}(s,t) \mathbb{K}_{21}(s,t) ds dt
\right) + O(1)\\ 
=& 2 \pi^{-2} \left(\pi \log \frac{1+r}{1-r}
- 2\log \frac{1+r^2}{1-r^2} \right) + O(1). 
\end{align*}
This implies the assertion. 
\end{proof}

\section{Proof of Theorem \ref{thm:c-correlation}}

\subsection{Complex-valued Gaussian processes}

Let $X = \{X(\la)\}_{\la \in \La}$ be a centered complex-valued 
Gaussian process in the sense that the real and imaginary parts form 
centered real Gaussian processes.
Here 
we say that a complex-valued Gaussian process is a \emph{complex
Gaussian process} if the real and imaginary parts are mutually
independent and have the same variance.  

For a complex-valued Gaussian process $X$, we use three $2 \times 2$ matrices
\begin{align*}
 \mathbb{M}_X(\la,\mu)  
=&\begin{pmatrix} 
E[X(\la) \overline{X(\mu)}] & E[X(\la) X(\mu)] \\
E[\overline{X(\la) X(\mu)}] & E[\overline{X(\la)} X(\mu)]
\end{pmatrix}, \\ 
 \widehat{\mathbb{M}}_X(\la,\mu) =& 
\begin{pmatrix} 
E[X(\la) X(\mu)] & E[X(\la) \overline{X(\mu)}] \\
E[\overline{X(\la)} X(\mu)] & E[\overline{X(\la) X(\mu)}]
\end{pmatrix} = \mathbb{M}_X(\lambda,\mu)
\begin{pmatrix} 0 & 1 \\ 1 & 0 \end{pmatrix}, \\
\widetilde{\mathbb{M}}_X(\lambda,\mu) 
=& \begin{pmatrix}
E[\Re X(\la) \Re X(\mu)] & E[\Re X(\la) \Im X(\mu)] \\
E[\Im X(\la) \Re X(\mu)] & E[\Im X(\la) \Im X(\mu)]
\end{pmatrix}. 
\end{align*}
For $\la_1,\la_2,\dots, \la_n \in \La$, the matrix 
$(\mathbb{M}_X(\la_i,\la_j))_{i,j=1}^n$ is Hermitian,  
$(\widehat{\mathbb{M}}_X(\la_i,\la_j))_{i,j=1}^n$ is complex symmetric,
and  $(\widetilde{\mathbb{M}}_X(\la_i,\la_j))_{i,j=1}^n$ is real symmetric.
The real Gaussian vector 
\[
(\Re X(\la_1), \Im X(\la_1), \dots, \Re X(\la_n), \Im X(\la_n))
\]
has the covariance matrix
$(\widetilde{\mathbb{M}}_X(\la_i,\la_j))_{i,j=1}^n$.
We can see that 
\begin{equation}
\widetilde{\mathbb{M}}_X(\lambda,\mu) 
= \frac{1}{4} U \mathbb{M}_X(\la, \mu) U^*, 
\quad 
U = 
\begin{pmatrix}
1 & 1 \\
- \ii & \ii
\end{pmatrix}. 
\label{eq:Mtilde}
\end{equation}
A (centered) complex-valued Gaussian process is uniquely determined 
by $\mathbb{M}_X$ or $\widehat{\mathbb{M}}_X$.

\begin{lemma} For $\la_1,\dots, \la_n \in \La$, 
 \begin{equation}
  \mathbb{E}[|X(\la_1) \cdots X(\la_n)|^2]
= \hf (\widehat{\mathbb{M}}_X(\la_i,\la_j))_{i,j=1}^n. 
\label{eq:hafnian}
\end{equation}
\end{lemma}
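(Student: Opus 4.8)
The statement to prove is the Hafnian formula
\[
E\bigl[|X(\la_1) \cdots X(\la_n)|^2\bigr]
= \hf\bigl(\widehat{\mathbb{M}}_X(\la_i,\la_j)\bigr)_{i,j=1}^n
\]
for a centered complex-valued Gaussian process $X$. The plan is to reduce this to the Wick formula \eqref{wick} for \emph{real} Gaussian random variables by expanding $|X(\la_i)|^2 = X(\la_i)\overline{X(\la_i)}$ and treating the collection $\{X(\la_i), \overline{X(\la_i)}\}_{i=1}^n$ as a real Gaussian system.

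First I would write $|X(\la_1)\cdots X(\la_n)|^2 = \prod_{i=1}^n X(\la_i)\overline{X(\la_i)}$, so that the left-hand side is the expectation of a product of $2n$ components drawn from the centered real Gaussian system generated by the real and imaginary parts of the $X(\la_i)$. Since each $X(\la_i)$ and $\overline{X(\la_i)}$ is a (complex) linear combination of these real coordinates, the Wick formula \eqref{wick} applies to the $2n$ factors and expresses the expectation as a Hafnian over all perfect matchings of the index set $\{X(\la_1),\overline{X(\la_1)},\dots,X(\la_n),\overline{X(\la_n)}\}$, where each matched pair contributes the corresponding second moment. The key point is that the relevant pair covariances are exactly the four entries appearing in $\widehat{\mathbb{M}}_X(\la_i,\la_j)$: namely $E[X(\la_i)X(\la_j)]$, $E[X(\la_i)\overline{X(\la_j)}]$, $E[\overline{X(\la_i)}X(\la_j)]$, and $E[\overline{X(\la_i)}\,\overline{X(\la_j)}]$.

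Next I would match the combinatorics. A perfect matching of the $2n$ labeled factors decomposes into matchings that pair an $X$ or $\overline{X}$ factor of index $i$ with one of index $j$; summing the product of pair covariances over all such matchings is precisely the definition \eqref{hafnian} of the Hafnian of the $2n\times 2n$ matrix assembled from the $2\times 2$ blocks $\widehat{\mathbb{M}}_X(\la_i,\la_j)$. Here the ordering convention in the definition of $\widehat{\mathbb{M}}_X$ — placing $E[X(\la)X(\mu)]$ and $E[X(\la)\overline{X(\mu)}]$ in the first row — is chosen exactly so that the block structure matches the Wick expansion, and this is what makes the Hafnian of the blocked matrix equal the Wick sum. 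Because the Hafnian is sign-less, no cancellation or orientation bookkeeping is needed, which is what makes the complex-modulus-squared moment (as opposed to a signed moment) land on a Hafnian rather than a Pfaffian.

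The main obstacle I expect is bookkeeping rather than conceptual: one must verify that the Wick sum over matchings of the $2n$ labeled factors coincides term-by-term with the Hafnian of the blocked matrix $(\widehat{\mathbb{M}}_X(\la_i,\la_j))_{i,j=1}^n$, taking care that the summation in \eqref{hafnian} ranges over matchings of a $2n$-element set while the blocks are indexed by $n$ process-points. The cleanest way to handle this is to relabel the $2n$ factors as $\xi_{2i-1}=X(\la_i)$ and $\xi_{2i}=\overline{X(\la_i)}$, so that the $(2i-1,2j-1)$, $(2i-1,2j)$, $(2i,2j-1)$, $(2i,2j)$ entries of the pair-covariance matrix $(E[\xi_a\xi_b])_{a,b=1}^{2n}$ are exactly the four entries of $\widehat{\mathbb{M}}_X(\la_i,\la_j)$; then \eqref{wick} and \eqref{hafnian} give the claim directly.
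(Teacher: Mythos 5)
Your proposal is correct and follows essentially the same route as the paper: both reduce the claim to the real Wick formula applied to the $2n$ factors $X(\la_i),\overline{X(\la_i)}$ and observe that the resulting pair covariances assemble exactly into the $2\times 2$ blocks $\widehat{\mathbb{M}}_X(\la_i,\la_j)$. The only difference is in how the Wick formula is extended from real Gaussian vectors to complex linear combinations: you invoke this directly (valid by multilinearity), whereas the paper makes the step explicit by proving the identity for $Y_a=\Re X+a\,\Im X$ with real $a,b$ and then analytically continuing to $a=-b=\ii$.
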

\begin{proof}
Let $Y_a(\la) = \Re X(\la) + a \Im X(\la)$ for $a \in \mathbb{R}$. 
It follows from the Wick formula (\ref{wick}) that 
\begin{equation}
 \mathbb{E}[Y_a(\la_1) Y_b(\la_1) \cdots Y_a(\la_n) Y_b(\la_n) ]
= \hf (\mf{Y}_{ij})_{i,j=1}^n, 
\label{y-wick} 
\end{equation}
where
\[
\mf{Y}_{ij}=\begin{pmatrix}
 E[Y_a(\la_i) Y_a(\la_j)] &  E[Y_a(\la_i) Y_b(\la_j)] \\
 E[Y_b(\la_i) Y_a(\la_j)] &  E[Y_b(\la_i) Y_b(\la_j)]
\end{pmatrix}.
\]
By analytic continuation, the formula (\ref{y-wick}) still holds 
for $a,b \in \mathbb{C}$. Therefore, by setting $a=-b=\ii$, 
we obtain the result. 
\end{proof}

\subsection{Conditional expectations for complex cases}

Let $\mathbb{D}$ be the open unit disc $\mathbb{D}= \{ z\in \mathbb{C} \ | \ 
|z|<1\}$.
We naturally extend the definition $\mu$ in \eqref{eq:definition-sm} to $\mathbb{D}$:
\[
\mu(z,w)= \frac{z-w}{1-zw} \qquad (z,w \in \mathbb{D}).
\]
For $z_1,\dots,z_n \in \mathbb{D}$
and $i=1,2,\dots,2n$, 
we consider 
\[
q_i(\bm{z})=q_i(z_1,\dots,z_n,z_{n+1},\dots,z_{2n})
\]
defined in \eqref{eq:definition-q}
with $z_{j+n}:=\overline{z_j}$ $(j=1,2,\dots,n)$.

Recall that $\mathbb{D}_+$ is the upper half of the open unit disc:
$\mathbb{D}_+=\{z \in \mathbb{D} \ | \ \Im(z)>0\}$.
If $f$ is the Gaussian power series defined by \eqref{eq:definition-f},
then $\{f (z)\}_{z \in \mathbb{D}_+}$ is a complex-valued 
Gaussian process with 
\[
\mathbb{M}_f(z,w)= \begin{pmatrix} \frac{1}{1-z \overline{w}} & \frac{1}{1-zw} \\
\frac{1}{1- \overline{zw}} & \frac{1}{1-\overline{z}w}
\end{pmatrix}.
\]

\begin{lemma}
 For $\eta \in \mathbb{D}_+$, 
\begin{equation} \label{eq:complex_f_law}
 (f \ | \ f(\eta)=0) \inlaw \mu(\cdot, \eta)  \mu(\cdot, \bar{\eta}) f. 
\end{equation}
Moreover,  
\begin{equation}
 (f'(z_i), i=1,2,\dots,n \ | \ f(z_1) =\cdots = f(z_n)=0) 
\inlaw (q_i(\bm{z}) f(z_i), i=1,2,\dots, n). 
\label{eq:inlaw}
\end{equation}
\end{lemma}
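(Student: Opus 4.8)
The plan is to imitate the proof of Lemma~\ref{lem:conditional} for the real process, the one genuinely new ingredient being the conjugate symmetry of $f$. Since the coefficients $a_k$ are real, $\overline{f(z)}=\sum_k a_k\bar z^{\,k}=f(\bar z)$, so $f(\bar\eta)=\overline{f(\eta)}$ and the single complex constraint $f(\eta)=0$ is \emph{equivalent} to the pair $f(\eta)=f(\bar\eta)=0$. Hence $(f\mid f(\eta)=0)$ is the process $\tilde f=f-P$, where $P(z)$ is the $L^2$-projection of $f(z)$ onto $\operatorname{span}_{\mathbb R}\{\Re f(\eta),\Im f(\eta)\}=\operatorname{span}_{\mathbb C}\{f(\eta),f(\bar\eta)\}$. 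Because a centered complex-valued Gaussian process is determined by its matrix kernel $\mathbb M$ (as recalled above), i.e.\ by both the Hermitian covariance $E[\,\cdot\,\overline{\,\cdot\,}]$ and the pseudo-covariance $E[\,\cdot\,\cdot\,]$, it suffices to check that these two kernels of $\tilde f$ agree with those of $g:=\mu(\cdot,\eta)\mu(\cdot,\bar\eta)f$. Below I write $\sigma(z,w)=\tfrac1{1-zw}$ for the holomorphic continuation of the kernel, so that $E[f(z)f(w)]=\sigma(z,w)$ and $E[f(z)\overline{f(w)}]=\sigma(z,\bar w)$.

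Writing $P(z)=\alpha(z)f(\eta)+\beta(z)f(\bar\eta)$, the orthogonality relations $E[(f(z)-P(z))f(\eta)]=E[(f(z)-P(z))f(\bar\eta)]=0$ determine $(\alpha,\beta)$ through the linear system with matrix $\Sigma_2=\bigl(\begin{smallmatrix}\sigma(\eta,\eta)&\sigma(\eta,\bar\eta)\\ \sigma(\eta,\bar\eta)&\sigma(\bar\eta,\bar\eta)\end{smallmatrix}\bigr)$, which is invertible for $\eta\in\mathbb D_+$ (here $\eta\neq\bar\eta$) by the Cauchy determinant identity \eqref{eq:Cauchy}. A routine projection computation, using $f(\bar\eta)=\overline{f(\eta)}$, then gives both conditional kernels as Schur complements; explicitly,
\[
E[\tilde f(z)\tilde f(w)]=\sigma(z,w)-\begin{pmatrix}\sigma(z,\eta)&\sigma(z,\bar\eta)\end{pmatrix}\Sigma_2^{-1}\begin{pmatrix}\sigma(\eta,w)\\ \sigma(\bar\eta,w)\end{pmatrix}.
\]
Both the conditional Hermitian kernel $E[\tilde f(z)\overline{\tilde f(w)}]$ and the target $E[g(z)\overline{g(w)}]$ are obtained from $E[\tilde f(z)\tilde f(w)]$ and $E[g(z)g(w)]$, respectively, by the substitution $w\mapsto\bar w$ (using $\overline{\mu(w,\eta)}=\mu(\bar w,\bar\eta)$ and $\overline{\mu(w,\bar\eta)}=\mu(\bar w,\eta)$). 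Hence the two identities to be proved reduce to the single rational identity
\[
\sigma(z,w)-\begin{pmatrix}\sigma(z,\eta)&\sigma(z,\bar\eta)\end{pmatrix}\Sigma_2^{-1}\begin{pmatrix}\sigma(\eta,w)\\ \sigma(\bar\eta,w)\end{pmatrix}=\mu(z,\eta)\mu(z,\bar\eta)\mu(w,\eta)\mu(w,\bar\eta)\,\sigma(z,w).
\]

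This last identity is exactly the two-point instance of the covariance identity already established inside the proof of Lemma~\ref{lem:conditional} (conditioning $\sigma$ on two points), now applied to the points $\eta,\bar\eta$; since both sides are rational functions of $(z,w,\eta)$ that agree whenever the arguments are real, they agree identically by analytic continuation. This proves \eqref{eq:complex_f_law}. Iterating the same argument on the $2n$ points $z_1,\dots,z_n,\bar z_1,\dots,\bar z_n$ (equivalently, invoking the analytic continuation of the $n$-point identity behind Lemma~\ref{lem:conditional}) yields $(f\mid f(z_1)=\cdots=f(z_n)=0)\inlaw M(\cdot)f$ with $M(z)=\prod_{j=1}^{2n}\mu(z,z_j)$, where $\bm z=(z_1,\dots,z_n,\bar z_1,\dots,\bar z_n)$.

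Finally, to obtain \eqref{eq:inlaw} I would use that $f\mapsto f'$ is linear, so the identity in law of processes upgrades to the Gaussian system $(f'(z_i)\mid\cdots)\inlaw (Mf)'(z_i)=M'(z_i)f(z_i)+M(z_i)f'(z_i)$. Each $M$ carries the factor $\mu(z,z_i)$, so $M(z_i)=0$, leaving $(f'(z_i)\mid\cdots)\inlaw M'(z_i)f(z_i)$; a direct computation of the residue-type derivative gives $M'(z_i)=\tfrac{1}{1-z_i^2}\prod_{k\neq i}\tfrac{z_i-z_k}{1-z_iz_k}=q_i(\bm z)$, matching \eqref{eq:definition-q}. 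I expect the main obstacle to be the bookkeeping of the complex Gaussian conditioning---carrying both covariance kernels simultaneously and recognizing that the Hermitian identity is the $w\mapsto\bar w$ image of the pseudo-covariance one---rather than the algebraic identity itself, which is inherited from the real case by analyticity.
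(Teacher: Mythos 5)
Your proposal is correct and follows the same skeleton as the paper's proof: condition the Gaussian process via a Schur complement, identify the conditional kernel with that of $g_\eta=\mu(\cdot,\eta)\mu(\cdot,\bar\eta)f$, and then deduce \eqref{eq:inlaw} from linearity of differentiation exactly as in Lemma~\ref{lem:conditional}. Where you diverge is in how the key kernel identity is verified. The paper conditions on $(\Re f(\eta),\Im f(\eta))$ using the real covariance matrices $\widetilde{\mathbb{M}}_f$, passes to $\mathbb{M}_f$ via the unitary $U$ of \eqref{eq:Mtilde}, and checks $\mathbb{M}_f(z,w)-\mathbb{M}_f(z,\eta)\mathbb{M}_f(\eta,\eta)^{-1}\mathbb{M}_f(\eta,w)=\mathbb{M}_{g_\eta}(z,w)$ by a direct computation; you instead exploit $\overline{f(z)}=f(\bar z)$ to recast $f(\eta)=0$ as the two scalar constraints $f(\eta)=f(\bar\eta)=0$ for the bilinear kernel $\sigma$, and import the two-point factorization identity from the proof of Lemma~\ref{lem:conditional} by rational continuation. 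Since the entries of $\mathbb{M}_f$ are precisely the values of $\sigma$ at the pairs drawn from $\{z,\bar z\}\times\{w,\bar w\}$, your scalar identity is the paper's matrix identity in different coordinates; what your route buys is that no new computation is needed (the identity is inherited from the real case), at the mild cost of having to justify that the Hermitian kernel is the $w\mapsto\bar w$ specialization of the pseudo-covariance, i.e.\ that the conditioned process still satisfies $\overline{\tilde f(w)}=\tilde f(\bar w)$ --- which does hold, because the residual of the projection onto $\mathrm{span}\{\Re f(\eta),\Im f(\eta)\}$ commutes with the symmetry $f(z)\mapsto\overline{f(\bar z)}$, so this is not a gap, merely a step worth stating explicitly.
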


\begin{proof}
The real Gaussian vector $(\Re f(z), \Im f(w))$, given 
$\Re f(\eta)= \Im f(\eta)=0$, has the covariance matrix 
\begin{align*}
\lefteqn{ 
\widetilde{\mathbb{M}}_f(z,w) -
\widetilde{\mathbb{M}}_f(z,\eta) \widetilde{\mathbb{M}}_f(\eta,\eta)^{-1}
\widetilde{\mathbb{M}}_f(\eta,w)} \\
&=\frac{1}{4} U
[\mathbb{M}_f(z,w) -
\mathbb{M}_f(z,\eta) \mathbb{M}_f(\eta,\eta)^{-1}
\mathbb{M}_f(\eta,w)] U^* 
\end{align*}
by \eqref{eq:Mtilde}.
A direct computation gives 
\[
\mathbb{M}_f(z,w) -
\mathbb{M}_f(z,\eta) \mathbb{M}_f(\eta,\eta)^{-1}
\mathbb{M}_f(\eta,w) = 
\mathbb{M}_{g_\eta}(z,w)
\]
with $g_\eta(z)= \mu(z,\eta)\mu(z,\overline{\eta}) f(z)$,
and we obtain \eqref{eq:complex_f_law}.
The remaining statement follows from \eqref{eq:complex_f_law}
in a manner similar to the proof of Lemma \ref{lem:conditional}.
\end{proof}

\subsection{Correlation functions for complex zeros}

We finally compute the correlation function 
$\rho_n^{\mathrm{c}}(z_1,\dots,z_n)$ for complex zeros of $f$.
Our starting point is the following Hammersley's formula (complex version),
see \cite{HKPV} and compare with Lemma \ref{lem:Hammersley}:
\begin{equation} \label{eq:Hammersley_complex}
 \rho_n^{\mathrm{c}}(z_1,\dots,z_n) = 
\frac{E[|f'(z_1) \cdots f'(z_n)|^2 \ | \ f(z_1) = \cdots =
 f(z_n)=0]}{(2\pi)^n \sqrt{\det(\widetilde{\mathbb{M}}_f(z_i,z_j))_{i,j=1}^n}}. 
\end{equation}
Note that $(2\pi)^{-n} [\det(\widetilde{\mathbb{M}}_f(z_i,z_j))
]^{-1/2}$ is the density 
of the real Gaussian vector 
\[
(\Re f(z_1), \Im f(z_1), \dots, \Re f(z_n), \Im f(z_n))
\]
at $(0,0,\dots,0,0)$.

\begin{proposition}\label{prop:complex-corr}
Let 
\[
M(\bm{z})= \ABS{\frac{1}{1-z_i \bar{z_j}} }_{i,j=1}^{2n}
\qquad \text{and} \qquad 
\hat{M}(\bm{z})= \ABS{\frac{1}{1-z_i z_j} }_{i,j=1}^{2n}.
\]
Then
\[
\rho_n^{\mathrm{c}}(z_1,\dots,z_n) 
= \frac{(-1)^n \det \hat{M}(\bm{z}) \cdot \hf \hat{M}(\bm{z})}{\pi^n 
\sqrt{\det M(\bm{z})}}. 
\] 
\end{proposition}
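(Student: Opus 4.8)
The plan is to start from the complex Hammersley formula \eqref{eq:Hammersley_complex} and systematically convert each of its three ingredients—the conditional expectation in the numerator, the Hafnian produced by conditioning, and the determinant in the denominator—into the quantities $\det \hat M(\bm z)$, $\hf \hat M(\bm z)$, and $\det M(\bm z)$. First I would rewrite the denominator: since $(\widetilde{\mathbb M}_f(z_i,z_j))_{i,j=1}^n = \tfrac14 U \,(\mathbb M_f(z_i,z_j))_{i,j=1}^n\, U^*$ by \eqref{eq:Mtilde} applied blockwise, taking determinants gives $\det(\widetilde{\mathbb M}_f(z_i,z_j)) = 4^{-2n} |\det U|^{2n} \det(\mathbb M_f(z_i,z_j))$. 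The key observation is that the $2n \times 2n$ Hermitian matrix $(\mathbb M_f(z_i,z_j))$, after the identification $z_{j+n} := \overline{z_j}$, is exactly the Cauchy-type matrix $M(\bm z) = ((1-z_i\bar z_j)^{-1})$, so the denominator is proportional to $\sqrt{\det M(\bm z)}$.

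Next I would handle the numerator via the conditioning lemma \eqref{eq:inlaw}: conditioned on $f(z_1)=\cdots=f(z_n)=0$, we have $f'(z_i) \inlaw q_i(\bm z) f(z_i)$ as a Gaussian system, so the conditional expectation becomes
\[
E[|f'(z_1)\cdots f'(z_n)|^2 \mid f(z_1)=\cdots=f(z_n)=0]
= \Big|\prod_{i=1}^n q_i(\bm z)\Big|^2 \, E[|f(z_1)\cdots f(z_n)|^2].
\]
Here I expect to use the Hafnian identity \eqref{eq:hafnian}, which gives $E[|f(z_1)\cdots f(z_n)|^2] = \hf(\widehat{\mathbb M}_f(z_i,z_j))_{i,j=1}^n$; by the same $z_{j+n}:=\overline{z_j}$ identification this Hafnian of $2\times 2$ blocks collapses to $\hf \hat M(\bm z)$, the Hafnian of the $2n \times 2n$ matrix $((1-z_iz_j)^{-1})$. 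For the prefactor $\prod_{i=1}^n |q_i(\bm z)|^2$ I would invoke the product formula \eqref{eq:product-of-q}, which expresses $\prod_{i=1}^{2n} q_i(\bm z)$ as $(-1)^{n(2n-1)}\det \hat M(\bm z)$; noting that under conjugation-pairing the factors for $i$ and $i+n$ are complex conjugates, the product $\prod_{i=1}^n |q_i(\bm z)|^2$ equals $\prod_{i=1}^{2n} q_i(\bm z)$ up to the correct sign, yielding $(-1)^n \det \hat M(\bm z)$.

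Assembling the three pieces and tracking the powers of $2$, $\pi$, and the factor $|\det U|^2 = 4$ should collapse the constants to exactly $\pi^{-n}$ and produce the claimed formula. The main obstacle I anticipate is bookkeeping rather than conceptual: carefully justifying that the $2n$-variable objects $q_i(\bm z)$, $M(\bm z)$, and $\hat M(\bm z)$—which are defined through the augmented list $(z_1,\dots,z_n,\overline{z_1},\dots,\overline{z_n})$—interact correctly with the reality/conjugation structure so that the modulus-squared terms convert into the full $2n$-fold products and Hafnians without spurious factors, and that all signs, including the sign $(-1)^n$ emerging from \eqref{eq:product-of-q} and from $\prod|q_i|^2 = \prod q_i$, combine consistently. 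I would verify the constant and the sign against the explicit $n=1$ case $\rho_1^{\mathrm c}(z) = \tfrac{|z-\bar z|}{\pi|1-z^2|(1-|z|^2)^2}$ given just after Theorem \ref{thm:c-correlation}.
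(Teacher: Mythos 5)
Your proposal is correct and follows essentially the same route as the paper's proof: Hammersley's complex formula, the conditioning identity \eqref{eq:inlaw} to extract $|q_1(\bm z)\cdots q_n(\bm z)|^2=\prod_{i=1}^{2n}q_i(\bm z)=(-1)^n\det\hat M(\bm z)$ via \eqref{eq:product-of-q}, the Hafnian identity \eqref{eq:hafnian} for $E[|f(z_1)\cdots f(z_n)|^2]$, and $\det(\widetilde{\mathbb M}_f(z_i,z_j))=4^{-n}\det M(\bm z)$ from \eqref{eq:Mtilde} for the denominator. The only minor imprecision is that $\prod_{i=1}^n|q_i(\bm z)|^2$ equals $\prod_{i=1}^{2n}q_i(\bm z)$ exactly (since $\overline{q_j(\bm z)}=q_{j+n}(\bm z)$), with no extra sign to track.
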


\begin{proof}
Let us compute the numerator on \eqref{eq:Hammersley_complex}.
Equation \eqref{eq:inlaw} gives
\[
E[|f'(z_1) \cdots f'(z_n)|^2 \ | \ f(z_1) = \cdots = f(z_n)=0] 
= |q_1(\bm{z})  \cdots q_n(\bm{z})|^2 E[|f(z_1) \cdots f(z_n)|^2].
\]
Here, since $\overline{q_j(\bm{z})}=q_{j+n}(\bm{z})$ for $j=1,2,\dots,n$,
it follows from \eqref{eq:product-of-q} that
\[
|q_1(\bm{z})  \cdots q_n(\bm{z})|^2 =\prod_{i=1}^{2n}
q_i(\bm{z})= (-1)^n \det \hat{M}(\bm{z}).
\]
Furthermore, from \eqref{eq:hafnian} we have
\[
E[|f(z_1) \cdots f(z_n)|^2] = \hf (\widehat{\mathbb{M}}_f(z_i,z_j))_{i,j=1}^n
= \hf \hat{M}(\bm{z}).
\]

On the other hand, the denominator on \eqref{eq:Hammersley_complex}
is computed by using \eqref{eq:Mtilde}:
\[
 \det(\widetilde{\mathbb{M}}_f(z_i,z_j))_{i,j=1}^n= 4^{-n} \det M(\bm{z}). 
\]
Consequently, we obtain the result from \eqref{eq:Hammersley_complex}. 
\end{proof} 

\begin{proof}[Proof of Theorem~\ref{thm:c-correlation}]
By the Cauchy determinant formula \eqref{eq:Cauchy},  
\begin{align*}
\det \hat{M}(\bm{z}) 
&= \prod_{i=1}^{2n} \frac{1}{1-z_i^2} 
\prod_{1 \le i < j \le 2n} \ABS{\frac{z_i-z_j}{1-z_i z_j}}^2, \\
\sqrt{\det M(\bm{z})}
&= \prod_{i=1}^{2n} \frac{1}{\sqrt{1-|z_i|^2}} \cdot  
\prod_{1 \le i < j \le 2n} \left| \frac{z_i-z_j}{1-z_i \overline{z_j}} \right|. 
\end{align*}
By noting that $z_{i+n} = \bar{z}_i \ (i=1,2,\dots,n)$,
We can see that 
\begin{align*}
\frac{\det \hat{M}(\bm{z})}{\sqrt{\det M(\bm{z})}}
&= (-1)^{n(n-1)/2} \prod_{i=1}^{n} \frac{1}{|1-z_i^2|} 
\cdot 
\prod_{i=1}^{n} \frac{z_i -\bar{z}_i}{|z_i - \bar{z}_i|}
\left(\prod_{1 \le i < j \le 2n} \frac{z_i-z_j}{1-z_iz_j} \right).
\end{align*}
Since $\frac{z -\bar{z}}{|z - \bar{z}|} = \ii$ for 
$\Im z >0$, from Proposition~\ref{prop:complex-corr} 
and Pfaffian-Hafnian identity \eqref{eq:pfaffian-hafnian} 
we see that 
\begin{align*}
\rho_n^{\mathrm{c}}(z_1,\dots,z_n) 
&= \frac{(-1)^{n(n-1)/2} }{(\pi\ii)^n}  
\prod_{i=1}^{n} \frac{1}{|1-z_i^2|} \cdot
\pf \ABS{\frac{z_i-z_j}{(1-z_i z_j)^2}}_{i,j=1}^{2n}.
\end{align*} 
By changing rows and columns in the Pfaffian, we finally obtain 
\[
\rho_n^{\mathrm{c}}(z_1,\dots,z_n) 
= \frac{1}{(\pi \ii)^n} 
\prod_{i=1}^n \frac{1}{|1-z_i^2|}
\cdot 
\pf\ABS{\mathbb{K}^{\mathrm{c}}(z_i,z_j)}_{i,j=1}^n.  
\]
\end{proof}

\bigskip

\section*{Acknowledgments}

The first author (SM)'s work was 
supported by JSPS Grant-in-Aid for Young Scientists (B) 22740060. 
The second author (TS)'s work was supported in part 
by JSPS Grant-in-Aid for Scientific Research (B) 22340020. 
S.M. would like to thank Yuzuru Inahama for his helpful conversations.
The authors appreciate referee's kind comments and suggestions.


\bigskip

\noindent
\textsc{Sho Matsumoto} \\
Graduate School of Mathematics, Nagoya University, Nagoya, 464-8602, Japan. \\
\verb|sho-matsumoto@math.nagoya-u.ac.jp|

\medskip

\noindent
\textsc{Tomoyuki Shirai} \\
Institute of Mathematics for Industry, Kyushu University, 
Fukuoka 819-0395, Japan. \\
\verb|shirai@imi.kyushu-u.ac.jp|

\end{document}